\documentclass[preprint,1p,times,sort&compress]{elsarticle}
\usepackage{amsmath,amsfonts,mathrsfs}
\usepackage{amssymb}
\usepackage{color}
\usepackage{mathrsfs}
\usepackage{mathptmx}
\usepackage{verbatim}
\usepackage{epsfig}
\usepackage{CJK}
\usepackage{bm}
\usepackage{amsfonts}
\usepackage{amsthm}

\usepackage{amsthm}
\newtheorem{theorem}{Theorem}[section]
\newtheorem{lemma}{Lemma}[section]

\newdefinition{rem}{Remark}

\numberwithin{equation}{section}

\def\mi{{\bf i}}
\def\e{\varepsilon}
\journal{}

\begin{document}
\begin{frontmatter}
\title{A reducibility result for Schr\"{o}dinger operator with finite smooth and time quasi-periodic potential}

\author[SDUJ,SDUW]{Jing Li
}
 \ead{xlijing@sdu.edu.cn}
\address[SDUJ]{School of Mathematics , Shandong University, Jinan 250100, P.R. China}
\address[SDUW]{School of Mathematics and Statistics, Shandong University, Weihai 264209, P.R. China}

\begin{abstract}
In the present paper, we establish a reduction
theorem for linear Schr\"odinger equation with finite smooth and time-quasi-periodic potential
 subject to Dirichlet boundary condition by means of KAM technique.  Moreover, it is proved that 
 the corresponding Schr\"odinger operator possesses the property of pure point spectra and zero Lyapunov exponent.
\end{abstract}

\begin{keyword}
Reducibility; Quasi-periodic Schr\"odinger operator; KAM theory; Finite smooth; Lyapunov exponent; Pure-point spectrum
\par
MSC:35P05; 37K55; 81Q15
\end{keyword}
\end{frontmatter}

%%%%%%%%%%%%%%%%%%%%%%%%%Main Text%%%%%%%%%%%%%%%%%%%%%%%%%%%%%%%%%%%%%%%%%%
\section{Introduction}

Recently years there are many literatures to investigate the reducibility for the linear Schr\"odinger equation of quasi-periodic potential, of the form
 \begin{equation}\label{1}\mi\, \dot u=(H_0+\e W(\omega t,x,-\mi \nabla))u,\quad x\in\mathbb R^d,\;\text{or}\; x\in\mathbb T^d=\mathbb R^d/(2\pi \mathbb Z)^d\end{equation}
or of the more general form, where $H_0=-\triangle +V(x)$ or an abstract self-adjoint (unbounded) operator and the perturbation $W$ is quasiperiodic in time $t$ and it may or may not depend on $x$ or/and $\nabla$.  From the reducibility it is proved immediately that  the corresponding  Schr\"odinger operator is of the pure point spectrum property and zero Lyapunov exponent.

 \ \

 When $x\in\mathbb R^d$, there are many interesting and important results. See \cite{Bambusi17arxiv,Bambusi17CMP,Combescure1987Ann,Wang17nonlinearity} and the references therein.

 \ \

 When $x\in\mathbb T^d$ with any integer $d\ge 1$, there are relatively less results. In \cite{eliasson-kuksin}, it is  proved that
\begin{equation}\label{2}\mi\, \dot u=\mi (-\triangle+\e W(\phi_0+\omega t,x;\omega)u),\;\; x\in\mathbb T^d \end{equation}
is reduced to an autonomous equation
for most values of the frequency vector $\omega$, where $W$ is analytic in $(t,x)$ and quasiperiodic in time $t$ with frequency vector $\omega$. The reduction is made by means
of T\"oplitz-Lipschitz property of operator and very hard KAM technique. As a special case of \eqref{2} with $d=1$, the reduction can be automatically derived from the earlier KAM theorem for nonlinear partial differential equations, assuming $W$ is analytic in $(t,x)$. See, \cite{Kuk1} and \cite{Poschel1996} for example.

\ \

As we know, the spectrum property depends heavily on the smoothness of the perturbation for the discrete Schr\"odinger operator. For example, the Anderson localization and positivity of the Lyapunov exponent for one frequency discrete quasi-period Schr\"odinger operator with analytic potential occur in non-perturbative sense (the largeness of the potential does not depend on the Diophantine condition. See \cite{Bourgain-Goldstin2000}, for the detail). However, one can only get perturbative results when the analytic property of  the potential is weaken to Gevrey regularity (see \cite{Klein2005}). Thus, a natural problem is that what happens when the perturbation $W$ is finite smooth in $(t,x)$.

\ \

Actually in his pioneer work, by reducibility Combescure \cite{Combescure1987Ann} studied the quantum stability problem for one-dimensional harmonic oscillator with a time-periodic perturbation. The
techniques of this paper were extended in \cite{Duclos-Stovicek96} and \cite{Duclos-Stovicek02}, in order to deal with  an abstract Schr\"odinger operator $ -\mi \partial_t + H_0 + \beta W(\omega t)$,
where $H_0$,  a selfadjoint operator acting in some Hilbert space , has simple discrete
spectrum $\lambda_n < \lambda_{n+1}$ obeying a gap condition of the type $\inf\{n^{-\alpha}(\lambda_{n+1}-\lambda_n):\; n= 1,2,...\} > 0$ for a given $\alpha>0,\beta\in\mathbb R$, and $W=W(t)$ is periodic in $t$ and $r$ times strongly
continuously differentiable as a bounded operator.

In the present paper, we will extend the time  periodic $W$ to time quasi-periodic one.  Let us consider a linear Schr\"{o}dinger equation with quasi-periodic coefficient:
\begin{equation}\label{eq1}%----------------------eq1.1
\mathcal{L} u\triangleq \mi \,u_{t}-u_{xx}+M u+\varepsilon W(\omega t,x)u=0
\end{equation}
subject to the boundary condition
\begin{equation}\label{eq2}%----------------------eq1.2
u(t,0)=u(t,\pi)=0,
\end{equation}
where $W$ is a quasi-periodic in time $t$ with frequency $\omega:$
$$ W(\omega t,x)=\mathcal{W}(\theta, x)|_{\theta=\omega t},
\;\;\mathcal{W}\in {C}^{N}(\mathbb{T}^{n}\times [0, \pi], \mathbb{R}),
\;\;\mathbb{T}^{n}=\mathbb{R}^{n}/(\pi \mathbb{Z})^{n},$$
and $W$ is also an even function of $x$.

Assume $\omega=\tau \omega_{0},$ where $\omega_{0}$ is Diophantine:

\begin{equation}\label{eq3}%----------------------eq1.3
|\langle k, \omega_{0} \rangle |\geq \frac{\gamma}{|k|^{n+1}},\;\;k\in \mathbb{Z}^{n}\setminus \{0\}
\end{equation}
with $0<\gamma\ll 1,$ a constant, and $\tau \in [1,2]$ is a parameter. Endow $L^2[0,\pi]\times L^2[0,\pi]$ a symplectic $\mi \, du \wedge d \bar u$. Take  $(L^2[0,\pi]\times L^2[0,\pi],\mi \, du \wedge d \bar u)$ as phase space. Then \eqref{eq1} is a hamiltonian system with hamiltonian functional
\[H(u,\bar u)=\int_{0}^{\pi}\frac12 |u_x|^2+\frac12(M u+\e W(\omega t,x))u\bar u\]

\begin{theorem}\label{thm1.1}
Assume that for a.e. $x\in [0, \pi],$ the potential $W(\cdot, x)$ is ${C}^{N}$
in the variable $\theta\in \mathbb{T}^{n}$ with $N\geq 80n.$ Then
there exists $ \;0<\varepsilon^{*}=\varepsilon^{*} (n, \gamma)\ll \gamma \ll 1,$
and exists a subset $\Pi\subset [1, 2]$ with
$$\mbox{Measure}\, \Pi\geq 1-O (\gamma)$$ and a quasiperiodic  symplectic change $u=\Psi(x,\omega t) v$
such that for any $\tau \in \Pi,$ \eqref{eq1} subject to \eqref{eq2} is changed into
\begin{equation} \mi \,v_{t}-v_{xx}+M_\xi v=0,\;\; v(t,0)=v(t,\pi)=0
\end{equation}
where $M_\xi$ is a real Fourier multiplier:
\[M_\xi \, \sin\, k x=(M+\xi_k) \sin\, k x,\; k\in \mathbb N,\]
with constants $\xi_k\in\mathbb R$ and $|\xi_k|\preceq \e$. Moreover, the Schr\"odinger operator $\mathcal L$ is of pure point spectrum property and of zero Lyapunov exponent.
\end{theorem}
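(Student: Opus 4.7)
The plan is to perform a KAM reducibility iteration in the Dirichlet sine basis, combined with the Jackson--Moser--Zehnder analytic approximation technique to handle the finite smoothness of $W$. First, I would expand $u=\sum_{k\ge 1}q_{k}\sin(kx)$, so that \eqref{eq1} becomes an infinite-dimensional linear Hamiltonian system on a weighted $\ell^{2}$ space, with diagonal unperturbed part $H_{0}=\tfrac12\sum_{k}(k^{2}+M)|q_{k}|^{2}$ and a small time-quasi-periodic quadratic perturbation whose matrix entries are $W_{jk}(\theta)=\tfrac{2}{\pi}\int_{0}^{\pi}\mathcal{W}(\theta,x)\sin jx\sin kx\,dx$; the evenness of $\mathcal{W}$ in $x$ together with the Dirichlet basis is what makes this quadratic form preserve the symplectic structure. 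Because $\mathcal{W}$ is only $C^{N}$ in $\theta$, I would approximate it by a sequence $\mathcal{W}_{\nu}$ of real-analytic functions on shrinking complex strips $|\Im\theta|<s_{\nu}$, satisfying $\|\mathcal{W}-\mathcal{W}_{\nu}\|_{C^{p}}\le C\,s_{\nu}^{N-p}$, and carry out the $\nu$-th KAM step on the analytic model $\mathcal{W}_{\nu}$.

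At step $\nu$, I would solve a homological equation of the form $\mi\,\omega\cdot\partial_{\theta}F+[D_{\nu},F]=R^{(\nu)}_{\mathrm{low}}$, where $D_{\nu}$ is the current diagonal normal form and $R^{(\nu)}_{\mathrm{low}}$ is the truncation of the current perturbation to Fourier modes $|k|\le K_{\nu}$. The relevant small divisors $\langle k,\tau\omega_{0}\rangle+\lambda_{j}(\tau)-\lambda_{\ell}(\tau)$ with $\lambda_{k}(\tau)=k^{2}+M+\xi^{(\nu)}_{k}(\tau)$ are controlled via first and second Melnikov conditions of the type $|\langle k,\tau\omega_{0}\rangle+\lambda_{j}-\lambda_{\ell}|\ge \gamma/(|k|^{n+1}(j+\ell))$; because $\tau\in[1,2]$ supplies a transversal one-parameter family and $\omega_{0}$ is Diophantine, a standard Borel--Cantelli-type argument excludes a bad set of $\tau$ of measure $O(\gamma)$, producing the set $\Pi$ of the theorem. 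Three error sources must be balanced at each step against a super-linear size decay $\e_{\nu+1}\le\e_{\nu}^{\kappa}$ with $\kappa>1$: the Fourier truncation tail $K_{\nu}^{-N}$, the analytic approximation error $s_{\nu}^{N}$, and the small-divisor loss $K_{\nu}^{n+1}/\gamma$. The numerical condition $N\ge 80n$ is exactly what is needed to run this three-way balance and still retain enough derivatives to guarantee convergence of the infinite composition of symplectic changes of variable.

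The main obstacle is precisely this regularity bookkeeping: in the analytic setting the strip width shrinks only polynomially from step to step while $\e_{\nu}$ shrinks super-exponentially, but with finite smoothness one has a finite budget of derivatives to spend, so the Cauchy-type operator estimates must be tracked very carefully and the final change of variables will be obtained only in a $C^{r}$ topology with $r\ll N$. Once convergence of $\Psi=\lim_{\nu}\Psi_{1}\circ\cdots\circ\Psi_{\nu}$ is established in a suitable operator norm, $\Psi$ is a bounded, boundedly invertible, quasi-periodic, symplectic transformation conjugating $\mathcal{L}$ to $\mi\,\partial_{t}-\partial_{xx}+M_{\xi}$ with $|\xi_{k}|\preceq\e$. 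The two spectral conclusions then follow in the standard way: the limit diagonal operator has the obvious orthonormal eigenbasis $\{\sin kx\}$, and pulling these back by $\Psi$ yields a basis of quasi-periodic Floquet solutions of $\mathcal{L}$, which gives pure point spectrum for $\mathcal{L}$ together with uniform boundedness of $\|u(t)\|_{L^{2}}$, hence zero Lyapunov exponent.
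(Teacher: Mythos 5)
Your plan follows the same route as the paper: diagonalize in the Dirichlet sine basis, use the Jackson--Moser--Zehnder smoothing to trade finite $C^{N}$-regularity in $\theta$ for analyticity on shrinking strips $s_{\nu}$, run a KAM iteration with parameter $\tau\in[1,2]$ excising bad resonant sets, and balance the three error sources (Fourier truncation, analytic-approximation tail, small-divisor loss) against a super-linear decay $\varepsilon_{\nu}=\varepsilon^{(4/3)^{\nu}}$ with $N\ge 80n$. One minor organizational difference: the paper does not replace $W$ by an analytic approximant $\mathcal{W}_{\nu}$ at each step; instead it writes $R=R_{0}+\sum_{l\ge1}R_{l}$ once and for all via \eqref{cite3.6}, keeps the tail $\sum_{l\ge\nu}\varepsilon_{l}\langle R_{l,\nu}u,\bar u\rangle$ inside the Hamiltonian, and kills the leading term $l=\nu$ at step $\nu$. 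Both bookkeepings are equivalent in spirit, but the telescoping version avoids re-estimating a full approximation error at every step.

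There is, however, one concrete slip worth correcting: your second Melnikov condition is stated as $|\langle k,\tau\omega_{0}\rangle+\lambda_{j}-\lambda_{\ell}|\ge \gamma/(|k|^{n+1}(j+\ell))$, i.e.\ the lower bound \emph{weakens} as $j+\ell$ grows. That is both the wrong direction and insufficient for the next step. The solution of the homological equation obeys $|\widehat F_{j\ell}(k)|\sim|\widehat R_{j\ell}(k)|/|\text{divisor}|$, and boundedness of $F$ on the weighted $\ell^{2}$-space $h_{N}$ is obtained precisely by forcing off-diagonal decay $|F_{j\ell}|\lesssim 1/(|j-\ell|+1)$ and then invoking the Schur/P\"oschel lemma (Lemma \ref{lem7.3}). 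This is why the paper imposes $|-\langle k,\omega\rangle+\lambda_{i}^{(m)}-\lambda_{j}^{(m)}|\ge (|i-j|+1)\gamma_{m}/|k|^{n+3}$, with the factor $|i-j|+1$ in the \emph{numerator}; with your form, $|F_{j\ell}|$ would grow like $j+\ell$ and the operator estimate \eqref{eq7.9} would fail unless you had, and invoked, compensating decay of $R_{j\ell}$ in $j+\ell$. Once you replace your divisor bound by the paper's, your measure argument for $\Pi$ (transversality in $\tau$ via $\partial_{\tau}\left(-\langle k,\omega_{0}\rangle+\frac{i^{2}-j^{2}}{\tau}\right)\ne0$) goes through exactly as in Section \ref{em}, and the rest of your sketch — convergence of $\Psi_{\infty}$, the resulting Floquet basis, pure point spectrum and zero Lyapunov exponent — agrees with the paper's Section 4.
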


\begin{rem} We will combine the Jackson-Moser-Zehnder approximation technique(see \cite{Chierchia}, for example) and KAM technique\cite{Kuk1} and \cite{Poschel1996}, which also applies to the case dealt with in \cite{Duclos-Stovicek96} and \cite{Duclos-Stovicek02}. Thus our result extends theirs. We also mention \cite{yuan-zhang} where the reducibility is dealt for a finite smooth and unbounded perturbation $W$.

\end{rem}

%----------------------------------------------------------------------------------------------------------
\section{Preliminaries}
\subsection{Analytical Approximation Lemma}

In this subsection, we cite an approximation lemma with the aim of this paper. These result can be obtained by \cite{Salamon1989} and \cite{Salamon2004}.

We start by recalling some definitions and setting some new notations. Assume $X$ is a Banach space with the norm
$||\cdot||_{X}$. First recall that $C^{\mu}(\mathbb{T}^{n}; X)$ for $0< \mu <1$ denotes the space of bounded
H\"{o}lder continuous functions $f: \mathbb{T}^{n}\mapsto X$ with the form
$$\|f\|_{C^{\mu}, X}=\sup_{0<|x-y|<1}\frac{\|f(x)-f(y)\|_{X}}{|x-y|^{\mu}}+\sup_{x\in \mathbb{T}^{n}}\|f(x)\|_{X}.$$
If $\mu=0$ then $\|f\|_{C^{\mu},X}$ denotes the sup-norm. For $\ell=k+\mu$ with $k\in \mathbb{N}$ and $0\leq \mu <1,$
we denote by $C^{\ell}(\mathbb{T}^{n};X)$ the space of functions $f:\mathbb{T}^{n}\mapsto X$ with H\"{o}lder continuous partial derivatives, i.e., $\partial ^{\alpha}f\in C^{\mu}(\mathbb{T}^{n}; X_{\alpha})$ for all muti-indices $\alpha=(\alpha_{1}, \cdots, \alpha_{n})\in \mathbb{N}^{n}$ with the assumption that
$|\alpha|:=|\alpha_{1}|+\cdots+|\alpha_{n}|\leq k$ and $X_{\alpha}$ is the Banach space of bounded operators
$T:\prod^{|\alpha|}(\mathbb{T}^{n})\mapsto X$ with the norm
$$\|T\|_{X_{\alpha}}=\sup \{||T(u_{1}, u_{2}, \cdots, u_{|\alpha|})||_{X}:\|u_{i}\|=1, \;1\leq i \leq |\alpha|\}.$$
We define the norm
$$||f||_{C^{\ell}}=\sup_{|\alpha|\leq \ell}||\partial ^{\alpha}f||_{C^{\mu}, X_{\alpha}}$$
\begin{lemma}(Jackson, Moser, Zehnder)\label{Jackson}
Let $f\in C^{\ell}(\mathbb{T}^{n}; X)$ for some $\ell>0$ with finite $C^{\ell}$ norm over $\mathbb{T}^{n}.$
Let $\phi$ be a radical-symmetric, $C^{\infty}$ function, having as support the closure of the unit ball centered at the origin, where $\phi$ is completely flat and takes value 1, let $K=\widehat{\phi}$ be its Fourier transform. For all $\sigma >0$ define
$$ f_{\sigma}(x):=K_{\sigma}\ast f=\frac{1}{\sigma^{n}}\int_{\mathbb{T}^{n}}K(\frac{x-y}{\sigma})f(y)dy.$$
Then there exists a constant $C\geq 1$ depending only on $\ell$ and $n$ such that the following holds: For any $\sigma >0,$ the function $f_{\sigma}(x)$ is a real-analytic function from $\mathbb{C}^{n}/(\pi \mathbb{Z})^{n}$ to $X$ such that if $\Delta_{\sigma}^{n}$ denotes the $n$-dimensional complex strip of width $\sigma,$
$$\Delta_{\sigma}^{n}:=\{x\in \mathbb{C}^{n}/(\pi \mathbb{Z})^{n}\big ||\mathrm{Im} x_{j}|\leq \sigma,\;1\leq j\leq n\},$$
then for $\forall \alpha\in\mathbb{N}^{n}$ such that $|\alpha|\leq \ell$ one has

\begin{equation}\label{cite3.1}
\sup_{x\in \Delta_{\sigma}^{n}}||\partial ^{\alpha}f_{\sigma}(x)
-\sum_{|\beta|\leq \ell-|\alpha|}\frac{\partial^{\beta+\alpha}f(\mathrm{Re}x)}{\beta !}(\sqrt{-1}\mathrm{Im}x)^{\beta}||_{X_{\alpha}}\leq C ||f||_{C^{\ell}}\sigma^{\ell-|\alpha|},
\end{equation}

and for all $0\leq s\leq \sigma,$
\begin{equation}\label{cite3.2}
\sup_{x\in \Delta_{s}^{n}}\|\partial^{\alpha} f_{\sigma}(x)-\partial^{\alpha}f_{s}(x)\|_{X_{\alpha}}
\leq C ||f||_{C^{\ell}}\sigma^{\ell-|\alpha|}.
\end{equation}

The function $f_{\sigma}$ preserves periodicity (i.e., if $f$ is T-periodic in any of its variable $x_{j}$, so is $f_{\sigma}$). Finally, if $f$ depends on some parameter $\xi\in \Pi\subset\mathbb{R}^{n}$ and
if the Lipschitz-norm of $f$ and its $x$-derivatives are uniformly bounded by $\|f\|_{C^{\ell}}^{\mathcal{L}},$
then all the above estimates hold with $\|\cdot\|$ replaced by $\|\cdot\|^{\mathcal{L}}.$
\end{lemma}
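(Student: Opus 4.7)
The plan is to realize $f_\sigma$ as a Bochner convolution $K_\sigma\ast f$, with $f$ identified with its periodic extension to $\mathbb{R}^n$, and to exploit two independent features of the kernel: a Paley--Wiener estimate giving analytic extension to the strip, and the complete flatness of $\phi$ at the origin, which kills all nontrivial moments of $K$. Because $\phi\in C^\infty_c(\mathbb{R}^n)$ is supported in the closed unit ball, Paley--Wiener yields
\[
|K(z)|\le C_N(1+|z|)^{-N}e^{|\mathrm{Im}\,z|}\qquad\forall\,N\in\mathbb{N},
\]
so $K_\sigma(z)=\sigma^{-n}K(z/\sigma)$ is entire and its restriction to any strip $|\mathrm{Im}\,z|\le\sigma$ still has Schwartz-type decay in the real direction. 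The integral defining $f_\sigma(x)$ for $x\in\Delta_\sigma^n$ therefore converges in $X$, is holomorphic in $x$ (differentiation under the integral plus Morera), and remains periodic because periodizing the kernel only rearranges the sum; equivalently, the Fourier series of $f_\sigma$ is $\sum_k\phi(\sigma k)\hat f(k)\,e^{ik\cdot x}$, manifestly periodic.

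The core moment identity is $\int K(u)\,u^\gamma\,du=c\,\partial^\gamma\phi(0)$ up to a normalizing constant, which vanishes for $|\gamma|\ge 1$ and is nonzero for $\gamma=0$ by the flatness and value one of $\phi$ at the origin; scaling then gives $\int K_\sigma(w)\,w^\gamma\,dw=\sigma^{|\gamma|}\delta_{\gamma,0}$ (after normalizing $\phi(0)$ appropriately, as in the statement). To prove \eqref{cite3.1}, fix $x\in\Delta_\sigma^n$, write $\ell=k+\mu$ with $0\le\mu<1$, move $\alpha$-derivatives onto $f$ via $\partial^\alpha f_\sigma=K_\sigma\ast\partial^\alpha f$ (putting derivatives on the kernel when $|\alpha|=k$), change variable $y=\mathrm{Re}(x)-z$ so that
\[
f_\sigma(x)=\int K_\sigma\bigl(\sqrt{-1}\,\mathrm{Im}\,x-z\bigr)\,f(\mathrm{Re}\,x+z)\,dz,
\]
and Taylor expand $\partial^\alpha f(\mathrm{Re}(x)+z)$ around $\mathrm{Re}(x)$ up to order $k-|\alpha|$ with H\"older remainder bounded by $C\|f\|_{C^\ell}|z|^{\ell-|\alpha|}$. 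The polynomial terms, after inserting the moment identities, collapse to exactly $\sum_{|\beta|\le\ell-|\alpha|}\frac{\partial^{\beta+\alpha}f(\mathrm{Re}\,x)}{\beta!}(\sqrt{-1}\,\mathrm{Im}\,x)^\beta$; after the scaling $u=z/\sigma$ the remainder is controlled by $\sigma^{\ell-|\alpha|}\int|K(\sqrt{-1}\,\mathrm{Im}(x)/\sigma-u)|(1+|u|)^{\ell-|\alpha|}\,du$, which the Paley--Wiener bound renders uniformly bounded for $|\mathrm{Im}\,x|\le\sigma$.

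The telescoping estimate \eqref{cite3.2} is then almost free: since $\Delta_s^n\subset\Delta_\sigma^n$ for $s\le\sigma$, one can apply \eqref{cite3.1} on $\Delta_s^n$ to both $f_\sigma$ and $f_s$. The Taylor polynomial on the right of \eqref{cite3.1} depends only on $f$ and on $x$, not on the mollification parameter, so subtracting the two inequalities cancels it and yields
\[
\|\partial^\alpha f_\sigma(x)-\partial^\alpha f_s(x)\|_{X_\alpha}\le C\|f\|_{C^\ell}\bigl(\sigma^{\ell-|\alpha|}+s^{\ell-|\alpha|}\bigr)\le 2C\|f\|_{C^\ell}\sigma^{\ell-|\alpha|}.
\]
Parameter dependence on $\xi\in\Pi$ is inherited automatically: convolution, moment computation, and Taylor expansion are all linear in $f$, so a difference quotient in $\xi$ commutes with every step and the same argument delivers \eqref{cite3.1}--\eqref{cite3.2} with $\|\cdot\|$ replaced by $\|\cdot\|^{\mathcal L}$.

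The main obstacle is the mixed-regularity case $\ell=k+\mu$ with $0<\mu<1$. One must stop the Taylor expansion at total order $k$ and use only the H\"older modulus of the top-order derivatives to gain the fractional factor $\sigma^\mu$, then verify that each tail integral $\int|K(\sqrt{-1}\eta-u)|(1+|u|)^{k+\mu-|\alpha|}\,du$ remains uniformly bounded for $|\eta|\le 1$. This uniformity is provided only just barely by the Paley--Wiener bound, because the Schwartz decay in the real direction must balance the exponential growth $e^{|\eta|}$ in the imaginary direction; ensuring this balance for every $|\alpha|\le\ell$ simultaneously is the technical heart of the argument.
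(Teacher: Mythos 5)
The paper does not actually prove this lemma—it is quoted from \cite{Salamon1989} and \cite{Salamon2004}—and your sketch reconstructs essentially the standard argument found in those references: Paley--Wiener decay of $K=\widehat{\phi}$, vanishing of all higher moments from the flatness of $\phi$ at the origin, Taylor expansion of $\partial^{\alpha}f$ with H\"older remainder to get \eqref{cite3.1}, and the triangle inequality on $\Delta_{s}^{n}\subset\Delta_{\sigma}^{n}$ to get \eqref{cite3.2}; this is correct and is the same route. Two small remarks: the moment identity has to be applied at a complex shift, i.e. $\int K_{\sigma}(\sqrt{-1}\,\eta-z)\,z^{\beta}\,dz=(\sqrt{-1}\,\eta)^{\beta}$, which follows from the real-shift identity by analytic continuation (both sides are entire in the shift thanks to the kernel decay), and the final uniformity is not as delicate as you suggest, since on the rescaled strip $|\mathrm{Im}|\leq\sqrt{n}$ the exponential factor is just a fixed constant while $(1+|u|)^{-N}$ with $N>n+\ell$ absorbs every polynomial weight at once.
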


For the following result, the reader can referred to \cite{yuan-zhang}, for detail.
For ease of notation, we shall replace $\|\cdot\|_{X}$ by
$\|\cdot\|.$
Fix a sequence of fast decreasing numbers $s_{\nu}\downarrow 0, \upsilon\geq 0,$ and $s_{0}\leq \frac{1}{2}.$
For a $X$-valued function $P(\phi),$
construct a sequence of real analytic functions $P^{(\upsilon)}(\phi)$ such that the following conclusions holds:
\begin{description}
  \item[(1)] $P^{(\upsilon)}(\phi)$ is real analytic on the complex strip $\mathbb{T}^{n}_{s_{\upsilon}}$ of the width $s_{\upsilon}$ around $\mathbb{T}^{n}.$
  \item[(2)] The sequence of functions $P^{(\upsilon)}(\phi)$ satisfies the bounds:
  \begin{equation}\label{cite3.3}
  \sup_{\phi\in\mathbb{T}^{n}}\|P^{(\upsilon)}(\phi)-P(\phi)\|\leq C \|P\|_{C^{\ell}}s_{\upsilon}^{\ell},
  \end{equation}
  \begin{equation}\label{cite3.4}
  \sup_{\phi\in\mathbb{T}^{n}_{s_{\upsilon+1}}}\|P^{(\upsilon+1)}(\phi)-P^{(\upsilon)}(\phi)\|\leq C \|P\|_{C^{\ell}}s_{\upsilon}^{\ell},
  \end{equation}
  where $C$ denotes (different) constants depending only on $n$ and $\ell.$
  \item[(3)] The first approximate $P^{(0)}$ is "small" with the perturbation $P$. Precisely speaking, for arbitrary $\phi\in\mathbb{T}^{n}_{s_{0}},$ we have
      \begin{eqnarray}\label{cite3.5}
      \|P^{(0)}(\phi)\|&\leq &\|P^{(0)}(\phi)-\sum_{|\alpha|\leq \ell}\frac{\partial^{\alpha}P(\mathrm{Re \phi})}{\alpha!}(\sqrt{-1}\mathrm{Im}\phi)^{\alpha}\|+\|\sum_{|\alpha|\leq \ell}\frac{\partial^{\alpha}P(\mathrm{Re \phi})}{\alpha!}(\sqrt{-1}\mathrm{Im}\phi)^{\alpha}\|\nonumber\\
      &\leq & C(\|P\|_{C^{\ell}}s_{0}^{\ell}+\sum_{0\leq m\leq\ell}\|P\|_{C^{m}}s_{0}^{m})\nonumber\\
      &\leq & C\|P\|_{C^{\ell}}\sum_{m=0}^{\ell}s_{0}^{m}\nonumber\\
      &\leq & C\|P\|_{C^{\ell}}\sum_{m=0}^{\infty}s_{0}^{m}\nonumber\\
      &\leq & C\|P\|_{C^{\ell}},
      \end{eqnarray}
      where constant $C$ is independent of $s_{0},$ and the last inequality holds true due to the hypothesis that
      $s_{0}\leq \frac{1}{2}.$
  \item[(4)] From the first inequality \eqref{cite3.3}, we have the equality below. For arbitrary $\phi\in\mathbb{T}^{n},$
      \begin{equation}\label{cite3.6}
      P(\phi)=P^{(0)}(\phi)+\sum_{\upsilon=0}^{+\infty}(P^{(\upsilon+1)}(\phi)-P^{(\upsilon)}(\phi)).
      \end{equation}
\end{description}

%-------------------------------------------------------------------------------------------
\subsection{Lemmas}
We need the following Lemmas.

\begin{lemma} \cite{Bogoljubov1969}\label{lem7.2}
For $0<\delta<1, \nu>1,$ one has
$$\sum_{k\in \mathbb{Z}^{n}}e^{-2|k|\delta}|k|^{\nu}<\left(\frac{\nu}{e}\right)^{\nu}\frac{(1+e)^{n}}{\delta^{\nu+n}}
%\leq \frac{C}{\delta^{\nu+n}}
.$$
\end{lemma}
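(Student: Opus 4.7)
The plan is to split the decay $e^{-2|k|\delta}=e^{-|k|\delta}\cdot e^{-|k|\delta}$ and separate the estimate into a pointwise bound on $|k|^\nu e^{-|k|\delta}$ and a purely exponential sum. Concretely, I would start from
\begin{equation*}
\sum_{k\in\mathbb{Z}^{n}}e^{-2|k|\delta}|k|^{\nu}\;\leq\;\Bigl(\sup_{k\in\mathbb{Z}^{n}}|k|^{\nu}e^{-|k|\delta}\Bigr)\sum_{k\in\mathbb{Z}^{n}}e^{-|k|\delta},
\end{equation*}
and bound the two factors independently. This is the natural decomposition because it matches the shape of the target bound, in which $(\nu/e)^{\nu}\delta^{-\nu}$ clearly comes from a calculus maximum while $(1+e)^{n}\delta^{-n}$ is the size of a geometric lattice sum.

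For the first factor I would replace the discrete sup by the continuous one and differentiate $f(r)=r^{\nu}e^{-r\delta}$ on $[0,\infty)$. The unique critical point is $r=\nu/\delta$, and $f(\nu/\delta)=(\nu/\delta)^{\nu}e^{-\nu}=(\nu/e)^{\nu}\delta^{-\nu}$, which gives the desired pointwise bound. For the second factor I would use that $|k|$ is the $\ell^{1}$-norm $|k_{1}|+\cdots+|k_{n}|$ (consistent with the Diophantine condition \eqref{eq3} used in the paper), so the sum factorizes as
\begin{equation*}
\sum_{k\in\mathbb{Z}^{n}}e^{-|k|\delta}=\prod_{j=1}^{n}\sum_{k_{j}\in\mathbb{Z}}e^{-|k_{j}|\delta}=\left(\frac{1+e^{-\delta}}{1-e^{-\delta}}\right)^{n}.
\end{equation*}
Then, using the elementary inequality $1-e^{-\delta}\geq \delta e^{-\delta}$ (equivalent to $e^{\delta}\geq 1+\delta$) and $e^{\delta}\leq e$ for $0<\delta<1$, each one-dimensional factor is bounded by $(e^{\delta}+1)/\delta\leq (1+e)/\delta$, so $\sum_{k}e^{-|k|\delta}\leq(1+e)^{n}/\delta^{n}$.

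Multiplying the two estimates produces $(\nu/e)^{\nu}(1+e)^{n}/\delta^{\nu+n}$, which is exactly the stated bound. No real obstacle is anticipated; the only two small points to verify carefully are the sharp one-variable inequality $1-e^{-\delta}\geq \delta e^{-\delta}$ (so the constant $(1+e)^{n}$ rather than a larger one appears) and the convention that $|k|$ is the $\ell^{1}$-norm, without which the factorization over coordinates would not be available.
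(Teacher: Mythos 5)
Your proof is correct, and it closes exactly to the stated constant: the pointwise maximum $\sup_{r\geq 0}r^{\nu}e^{-r\delta}=(\nu/e)^{\nu}\delta^{-\nu}$ handles the polynomial growth, the $\ell^{1}$-factorization $\sum_{k\in\mathbb{Z}^{n}}e^{-|k|\delta}=\bigl(\tfrac{1+e^{-\delta}}{1-e^{-\delta}}\bigr)^{n}$ handles the lattice sum, and the elementary bounds $1-e^{-\delta}\geq\delta e^{-\delta}$ and $e^{\delta}<e$ on $(0,1)$ give the one-dimensional factor $\leq(1+e)/\delta$, with the strict inequality in the conclusion coming from $e^{\delta}<e$. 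The paper itself does not prove this lemma but only cites it (Bogoljubov--Mitropoliskii--Samoilenko), so there is no in-text argument to compare against; your derivation is the standard one behind such estimates and there is no gap.
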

%%%%%%%%%%%%%%%%%%%%%%%%%%%%%%%%%%%%%%%%%%%%%%%%%

%%%%%%%%%%%%%%%%%%%%%%%%%%%%%%%%%%%%%%%%%%%%%%%%%

\begin{lemma}\cite{Poschel1996}\label{lem7.3}
If $A=(A_{ij})$ is a bounded linear operator on $\ell^{2},$ then also $B=(B_{ij})$ with
$$B_{ij}=\frac{|A_{ij}|}{|i-j|},\;i\neq j,$$
and $B_{ii}=0$ is bounded linear operator on $\ell^{2},$ and $\|B\|\leq \left(\frac{\pi}{\sqrt{3}}\right)\|A\|,$
where $\|\cdot\|$ is $\ell^{2}\rightarrow\ell^{2}$ operator norm.
\end{lemma}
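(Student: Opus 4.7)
The plan is to prove the bound by applying the Cauchy--Schwarz inequality row-wise, together with the elementary identity $\sum_{k=1}^{\infty} 1/k^{2} = \pi^{2}/6$. First I would dispose of two tempting alternatives that fail. Decomposing $B = \sum_{k\ne 0} B^{(k)}$ along its $k$-th off-diagonals, each $B^{(k)}$ is a weighted shift whose operator norm equals $\sup_{i}|B_{i,i-k}| \le \|A\|/|k|$ (since individual matrix entries are bounded by the operator norm); but summing via the triangle inequality gives a divergent harmonic series. Likewise the Schur test applied to the $\ell^{1}$-row sums of $B$ involves $\sum_{k}1/|k|$ and again diverges. To obtain a finite bound one must keep the $\ell^{2}$ structure alive during the estimate, which is exactly what a single application of Cauchy--Schwarz accomplishes.

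For a finitely supported $x\in\ell^{2}$ I would write
\[
 \|Bx\|^{2} \;=\; \sum_{i}\Bigl|\sum_{j\ne i}\frac{1}{|i-j|}\cdot |A_{ij}|\,x_{j}\Bigr|^{2}
\]
and apply Cauchy--Schwarz inside the absolute value, splitting the summand as the product of $1/|i-j|$ and $|A_{ij}|\,|x_{j}|$. The first factor contributes $\sum_{j\ne i}1/(i-j)^{2} = 2\sum_{k=1}^{\infty}1/k^{2} = \pi^{2}/3$, a constant independent of $i$. The remaining factor produces the double sum $\sum_{i}\sum_{j\ne i}|A_{ij}|^{2}|x_{j}|^{2}$; swapping the order of summation gives $\sum_{j}|x_{j}|^{2}\sum_{i}|A_{ij}|^{2} = \sum_{j}|x_{j}|^{2}\,\|Ae_{j}\|^{2} \le \|A\|^{2}\,\|x\|^{2}$, using $|A_{ij}|^{2} = |\langle Ae_{j},e_{i}\rangle|^{2}$ and Parseval. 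Combining, $\|Bx\|^{2}\le (\pi^{2}/3)\|A\|^{2}\|x\|^{2}$, so $\|B\|\le (\pi/\sqrt{3})\|A\|$, and the bound extends from finitely supported vectors to all of $\ell^{2}$ by density.

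The step I expect to be the main delicate point is precisely the order of summation after Cauchy--Schwarz: one must defer the sum over $i$ so that $\sum_{i}|A_{ij}|^{2}$ appears rather than $\sum_{i}|A_{ij}|$, since only the former is uniformly controlled by $\|A\|^{2}$ (the latter would be an $\ell^{1}$-column-sum, not dominated by the operator norm in general). Everything else is bookkeeping; the constant $\pi/\sqrt{3}$ then arises inevitably from $\zeta(2)=\pi^{2}/6$, explaining why this is the sharp numerical factor produced by the argument.
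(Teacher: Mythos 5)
Your argument is correct: the row-wise Cauchy--Schwarz estimate, $\bigl|\sum_{j\ne i}\frac{|A_{ij}|}{|i-j|}x_j\bigr|^2\le\bigl(\sum_{j\ne i}\frac{1}{(i-j)^2}\bigr)\sum_{j\ne i}|A_{ij}|^2|x_j|^2$, followed by exchanging the (nonnegative) sums and using $\sum_i|A_{ij}|^2=\|Ae_j\|^2\le\|A\|^2$ and $\sum_{j\ne i}(i-j)^{-2}\le 2\zeta(2)=\pi^2/3$, gives exactly $\|B\|\le(\pi/\sqrt{3})\|A\|$, and your preliminary remarks about why the diagonal-decomposition triangle inequality and the $\ell^1$ Schur test fail are also accurate. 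The paper itself gives no proof of this lemma (it is quoted from P\"oschel's appendix), and your argument is the standard one behind that citation, producing the same sharp constant, so there is nothing to add.
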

%%%%%%%%%%%%%%%%%%%%%%%%%%%%%%%%%%%%%%%%%%%%%%%%%%%%%%

%%%%%%%%%%%%%%%%%%%%%%%%%%%%%%%%%%%%%%%%%%%%%
\begin{rem}\label{rem1}
Lemma \ref{lem7.3} holds true for the weight norm $\|\cdot\|_{N}.$
\end{rem}
%-----------------------------------------------------------------------------------------------------------
\section{Main results}

Consider the differential equation:
\begin{equation}\label{eq4}%----------------------eq(2.1)
\mathcal{L}u=\mi \,u_{t}-u_{xx}+Mu+\varepsilon W(\omega t, x)u=0
\end{equation}
subject to the boundary condition

\begin{equation}\label{eq5}%----------------------eq(2.2)
u(t,0)=u(t, \pi)=0.
\end{equation}
It is well-known that the Sturm-Liouville problem

\begin{equation}\label{eq6}%----------------------eq(2.3)
-y''+My=\lambda y,
\end{equation}
with the boundary condition

\begin{equation}\label{eq7}%----------------------eq(2.4)
y(0)=y(\pi)=0
\end{equation}
has the eigenvalues and eigenfunctions, respectively,

\begin{equation}\label{eq8}%----------------------eq(2.5)
\lambda_{k}=k^{2}+M,\;\;k=1, 2, \ldots,
\end{equation}
\begin{equation}\label{eq9}%----------------------eq(2.6)
\phi_{k}(x)=\sin kx,\;\;k=1, 2, \ldots .
\end{equation}
Write
\begin{equation}\label{eq10}%----------------------eq(2.7)
u(t, x)=\sum_{k=1}^{\infty} u_{k}(t)\phi_{k}(x).
\end{equation}
Note that $W$ is an even function of $x$. Write
\begin{equation}\label{eq11}%----------------------eq(2.8)
W(\omega t, x)=\sum_{k=0}^{\infty} v_{k}(\omega t)\varphi_{k}(x),
\end{equation}
where $$\varphi_{k}(x)=\cos 2kx,\;\;k=1, 2, \ldots .$$
Considering that
\begin{eqnarray*}
 W(\omega t, x)u(x)&=&\sum_{k=1}^{\infty}\langle W(\omega t, x)u(x), \phi_{k}(x)\rangle \phi_{k}\\
 &=& \sum_{k=1}^{\infty}\sum_{l=1}^{\infty}\sum_{j=0}^{\infty}c_{jlk}v_{j}u_{l}\phi_{k}(x),
\end{eqnarray*}
where
\begin{equation}\label{eq2.11-}
c_{jlk}=\int_{0}^{\pi} \varphi_{j}\phi_{l}\phi_{k}dx
=\int_{0}^{\pi}\cos 2jx\cdot\sin lx\cdot\sin kx \,dx=
\left\{%
\begin{array}{ll}
 \;\;0,\;k\neq \pm l\pm 2j,\\
\;\;\frac{\pi}{4},\; k= l\pm 2j \geq 1, \\
 -\frac{\pi}{4},\; k=-l\pm 2j\geq 1.\\
\end{array}%
\right.
\end{equation}
Then \eqref{eq4} can be expressed as
\begin{equation*}
\sum_{k=1}^{\infty}\left(\mi\,\dot{u}_{k}+\lambda_{k}u_{k}+\varepsilon \sum_{l=1}^{\infty}\sum_{j=0}^{\infty}c_{jlk}v_{j}u_{l}\right)\phi_{k}=0,
\end{equation*}
which implies that
\begin{equation}\label{eq13}%----------------------eq(2.10)
\mi\,\dot{u}_{k}+\lambda_{k}u_{k}+\varepsilon \sum_{l=1}^{\infty}\sum_{j=0}^{\infty}c_{jlk}v_{j}u_{l}=0.
\end{equation}
 This is a hamiltonian system
\begin{eqnarray}\label{eq19}%---------------------------------(2.17)
\left\{%
\begin{array}{cl}
&\mi\,\dot{u}_{k}=\frac{\partial H}{\partial \overline{u}_{k}},\;\;k\geq 1,\\
&\\
&\mi\,\dot{\overline{u}}_{k}=-\frac{\partial H}{\partial u_{k}},\;\;k\geq 1,\\
\end{array}%
\right.
\end{eqnarray}
with hamiltonian
\begin{equation}\label{eq20}%---------------------------------(2.18)
H(u,\overline{u})=\sum_{k=1}^{\infty}{\lambda_{k}}u_{k}
\overline{u}_{k}+\varepsilon\sum_{k=1}^{\infty}\sum_{l=1}^{\infty}\sum_{j=0}^{\infty}
c_{jlk}{v_{j}(\theta)}
u_{l}\overline{u}_{k}.
\end{equation}
For two sequences $x=(x_{j}\in \mathbb{C},\;j=1, 2,\ldots)$, $y=(y_{j}\in \mathbb{C},\;j=1, 2,\ldots),$
define
$$\langle x, y\rangle=\sum_{j=1}^{\infty}x_{j}y_{j}.$$
Then we can write
\begin{equation}\label{eq21}%---------------------------------(2.19)
H=\langle\Lambda u,\overline{u}\rangle+\varepsilon \langle R(\theta)u, \overline{u}\rangle,
\end{equation}
where
\begin{equation*}
\Lambda=diag \left({\lambda_{j}}: j=1,2,\ldots\right),\;\;\theta=\omega t,
\end{equation*}
\begin{equation}\label{eq22.1}
R(\theta)=\left(R_{kl}(\theta): k, l=1,2,\ldots\right),
\;\; R_{kl}(\theta)=\sum_{j=0}^{\infty}{c_{jlk}v_{j}(\theta)}.
\end{equation}%----------eq2.20
Define a Hilbert space $h_{N}$ as follows:
\begin{equation}\label{eq23}%------------------eq(2.23)
h_{N}=\{z=(z_{k}\in\mathbb{C}:k=1,2,\ldots)\}.
\end{equation}
Let $$\langle y, z\rangle_{N}:=\sum_{k=1}^{\infty}k^{2N}y_{k}\overline{z}_{k},\;\;\forall y, z\in h_{N}.$$
\begin{equation}\label{eq24}%------------------eq(2.24)
\|z\|_{N}^{2}=\langle z, z\rangle_{N}.
\end{equation}
Recall that $$\mathcal{W}(\theta,x)\in {C}^{N}(\mathbb{T}^{n}\times [0,\pi], \mathbb{R}).$$
Note that the Fourier transformation \eqref{eq10} is isometric from $u\in\mathcal{H}^{N}[0, \pi]$
to $(u_{k}: k=1, 2, \ldots)\in h_{N},$ where $\mathcal{H}^{N}[0, \pi]$ is the usual Sobolev space.
By \eqref{eq22.1}, we have that
\begin{eqnarray}\label{eq25}
\sup_{\theta\in\mathbb{T}^{n}}\|\sum_{|\alpha|= N}\partial^{\alpha}_{\theta} R(\theta) \|_{h_N\to h_N}\leq C,
\end{eqnarray}
where $ ||\cdot||_{h_N\to h_N}$ is the operator norm from $h_N$ to $h_N$, and $\alpha=(\alpha_{1}, \alpha_{2}, \ldots , \alpha_{n}),$ $|\alpha|=|\alpha_{1}|+|\alpha_{2}|+\ldots+|\alpha_{n}|,$ $\alpha_{j}' s\geq 0$
is an integer.

Actually,
$$\partial_{\theta}^{\alpha}R(\theta)=\left(\sum_{j=0}^{\infty}
C_{jlk}\partial_{\theta}^{\alpha}v_{j}(\theta): l, k=1, 2, \cdots\right).$$
For any $z=(z_{k}\in \mathbb{C}: k=1, 2, \cdots)\in h_{N},$
$$\left(\sum_{|\alpha|=N}\partial_{\theta}^{\alpha}R(\theta)\right)z=\left(\sum_{j=0}^{\infty}
\sum_{k=1}^{\infty}C_{jlk}(\sum_{|\alpha|=N}\partial_{\theta}^{\alpha}v_{j}(\theta))z_{k}: l=1, 2, \cdots\right).$$
Thus,
\begin{eqnarray}\label{eq2.25}
&&\left\|\left(\sum_{|\alpha|=N}\partial_{\theta}^{\alpha}R(\theta)\right)z\right\|_{N}^{2}\\\nonumber
&=&\sum_{l=1}^{\infty}l^{2N}\left| \sum_{j=0}^{\infty}
\sum_{k=1}^{\infty}C_{jlk}\left(\sum_{|\alpha|=N}\partial_{\theta}^{\alpha}v_{j}(\theta)\right)z_{k}\right|^{2}\\\nonumber
&=&\sum_{l=1}^{\infty}l^{2N}\left| \sum_{k=1}^{\infty}C_{0lk}\left(\sum_{|\alpha|=N}\partial_{\theta}^{\alpha}v_{0}(\theta)\right)z_{k}+
\sum_{j=1}^{\infty}
\sum_{k=1}^{\infty}C_{jlk}\left(\sum_{|\alpha|=N}\partial_{\theta}^{\alpha}v_{j}(\theta)\right)z_{k}\right|^{2}\\\nonumber
&\leq &C\sum_{l=1}^{\infty}l^{2N}\left(\left| \sum_{k=1}^{\infty}C_{0lk}\left(\sum_{|\alpha|=N}\partial_{\theta}^{\alpha}v_{0}(\theta)\right)z_{k}\right|^{2}+
\left| \sum_{j=1}^{\infty}
\sum_{k=1}^{\infty}C_{jlk}\left(\sum_{|\alpha|=N}\partial_{\theta}^{\alpha}v_{j}(\theta)\right)z_{k}\right|^{2}\right),
\end{eqnarray}
%%%%%%%%%%%%%%%%%%%%%%%%%%%%%%%%%%%%%%%%%%%%%%%%%%%%%%%%%%%
where
\begin{eqnarray}\label{eq2.251}
&&\sum_{l=1}^{\infty}l^{2N}\left| \sum_{k=1}^{\infty}C_{0lk}\left(\sum_{|\alpha|=N}\partial_{\theta}^{\alpha}v_{0}(\theta)\right)z_{k}\right|^{2}\\\nonumber
&\leq & C\left| \sum_{|\alpha|=N}\partial_{\theta}^{\alpha}v_{0}(\theta)\right|^{2}\sum_{l=1}^{\infty}l^{2N} |z_{l}|^{2}
\leq C \|z\|^{2}_{N},
\end{eqnarray}
here we used the fact that
$C_{0lk}=0$, if $k\neq l.$
%%%%%%%%%%%%%%%%%%%%%%%%%%%%%%%%%%%%%%%%%%%
Let $$\gamma_{\,lj}=\frac{(\pm l\pm j)j}{l},\;\;\mbox{where}\;\;(\pm l\pm j)jl\neq 0.$$
Thus,
\begin{eqnarray}\label{eq2.252}
&&\left\|\left(\sum_{|\alpha|=N}\partial_{\theta}^{\alpha}JR^{zz}(\theta)J\right)z\right\|_{N}^{2}\\\nonumber
&=&\sum_{l=1}^{\infty}l^{2N}\left| \frac{1}{2}\sum_{j=1}^{\infty}
\sum_{k=1}^{\infty}C_{jlk}(\sum_{|\alpha|=N}\partial_{\theta}^{\alpha}v_{j}(\theta))z_{k}\right|^{2}\\\nonumber
&=&\sum_{l=1}^{\infty}l^{2N}\left|\frac{1}{2}\sum_{j=1}^{\infty} C_{jl(\pm l\pm j)}(\sum_{|\alpha|=N}\partial_{\theta}^{\alpha}v_{j}(\theta))z_{\pm l\pm j}\right|^{2}\\\nonumber
&=&\frac{1}{4}\sum_{l=1}^{\infty}l^{2N}\left|\frac{1}{\gamma_{\,lj}^{N}}\cdot \gamma_{\,lj}^{N}\sum_{j=1}^{\infty} C_{jl(\pm l\pm j)}(\sum_{|\alpha|=N}\partial_{\theta}^{\alpha}v_{j}(\theta))z_{\pm l\pm j}\right|^{2}\\\nonumber
&\leq & C\left(\sum_{j=1}^{\infty}\frac{1}{\gamma_{\,lj}^{2N}}\right)\left(\sum_{j=1}^{\infty}|j|^{2N}\Big|\sum_{|\alpha|=N}\partial_{\theta}^{\alpha}v_{j}(\theta)
\Big|^{2}\sum_{l=1}^{\infty}|\pm l\pm j|^{2N}|z_{\pm l\pm j}|^{2}\right)\\\nonumber
&\leq &C\sum_{j=1}^{\infty}|j|^{2N}\Big|\sum_{|\alpha|=N}\partial_{\theta}^{\alpha}v_{j}(\theta)\Big|^{2}\|z\|_{N}^{2}\\\nonumber
&\leq &C\sup_{(\theta,x)\in \mathbb{T}^{n}\times [-\pi, \pi]}
\big|\sum_{|\alpha|=N}\partial_{\theta}^{\alpha}\partial_{x}^{N}\mathcal{V}(\theta, x)\big|\|z\|_{N}^{2}\leq C \|z\|_{N}^{2},
\end{eqnarray}
where $C$ is a universal constant which might be different in different places.
%%%%%%%%%%%%%%%%%%%%%%%%%%%%%%%%%%%%%%%%%%%%%%%%%%%%%%%%%%%%%%%%%%%%%%%%%%%%%%%%%%%%%%%%%%%%%%%%
Combing \eqref{eq2.25}, \eqref{eq2.251} and \eqref{eq2.252}, we have
\begin{equation}\label{eq2.26}
\sup_{\theta\in\mathbb{T}^{n}}\|\sum_{|\alpha|= N}\partial^{\alpha}_{\theta}R(\theta)\|_{h_N\to h_N}\leq C.
\end{equation}

%%%%%%%%%%%%%%%%%%%%%%%%%%%%%%%%%%%%%%%%%%%%%%%%%%%%%%%%%%%%%%%%%%%%%%%%%%%%%%%%%%%%%%%%%%%%%%%%%%%%%%%%%%%%%%%%%%%%%%%%%%%%%%%%%%%%%%%%%%
 Now let us apply analytical approximation Lemma to the perturbation $P(\phi).$
Take a sequence of real numbers $\{s_{v}\geq 0\}_{v=0}^{\infty}$ with $s_{v}>s_{v+1}$ goes fast to zero.
Let $R(\theta)=P(\theta)$. Then by \eqref{cite3.6}
 we can write,
  \begin{equation}\label{*}
 R(\theta)=R_{0}(\theta)+\sum_{l=1}^{\infty}R_{l}(\theta),
 \end{equation}
 where $R_{0}(\theta)$ is analytic in $\mathbb{T}_{s_{0}}^{n}$ with
  \begin{equation}\label{**}
 \sup_{\theta\in\mathbb{T}^{n}_{s_{0}}}\|R_{0}(\theta)\|_{h_{N}\to h_{N}}\leq C,
  \end{equation}
 and $R_{l}(\theta)\;(l\geq 1)$ is analytic in $\mathbb{T}^{n}_{s_{l}}$ with
 \begin{equation}\label{***}
 \sup_{\theta\in\mathbb{T}^{n}_{s_{l}}}\|R_{l}(\theta) \|_{h_{N}\to h_{N}}\leq Cs^{N}_{l-1}.
  \end{equation}
 %Because $N$ is fixed. In the following, we write $\|\cdot\|=\|\cdot\|_{N}.$
 %%%%%%%%%%%%%%%%%%%%%%%%%%%%%%%%%%%%%%%%%%%%%%%%%%%%%%%%%%%%%%%%%%%%%%%%%%%%%%%%%%%%%%%%%%%%%%%
 \subsection{Iterative parameters of domains}
 Let

\begin{itemize}
  \item $\varepsilon_{0}=\varepsilon, \varepsilon_{\nu}=\varepsilon^{(\frac{4}{3})^{\nu}}, \nu=0, 1, 2, \ldots,$
  which measures the size of perturbation at $\nu-th$ step.
  \item $s_{\nu}=\varepsilon_{\nu+1}^{1/N}, \nu=0, 1, 2, \ldots,$
  which measures the strip-width of the analytic domain $\mathbb{T}_{s_{\nu}}^{n},$
  $\mathbb{T}_{s_{\nu}}^{n}=\{\theta\in \mathbb{C}^{n}/(\pi \mathbb{Z})^{n}: |Im \theta|\leq s_{\nu}\}.$
  \item $C({\nu})$ is a constant which may be different in different places, and it is of the form
   $$C({\nu})= C_{1} 2^{C_{2}\nu},$$
  where $C_{1},$ $C_{2}$ are absolute(?) constants.
  \item $K_{\nu}=10 s_{\nu}^{-1}(\frac{4}{3})^{\nu}|\log \varepsilon|.$
  \item$\gamma_{\nu}=\frac{\gamma}{2^{\nu}},\,0<\gamma\ll 1.$
  \item a family of subsets $\Pi_{\nu}\subset [1, 2]$ with $[1, 2]\supset \Pi_{0}\supset \ldots \supset \Pi_{\nu}\supset \ldots,$
  and $$mes \Pi_{\nu}\geq mes \Pi_{\nu-1}-C\gamma_{\nu-1},$$

  \item For an operator-value (or a vector function) $B(\theta, \tau),$ whose domain is
  $(\theta, \tau)\in \mathbb{T}_{s_{\nu}}^{n}\times \Pi_{\nu}.$ Set
  $$ \|B\|_{\mathbb{T}^{n}_{s_{\nu}}\times \Pi_{\nu}}=\sup_{(\theta, \tau)
  \in \mathbb{T}^{n}_{s_{\nu}}\times \Pi_{\nu}}\|B(\theta, \tau)\|_{h_{N}\to h_{N}},$$ where $\|\cdot\|_{h_{N}\to h_{N}}$ is the operator norm, and set
  $$ \|B\|^{\mathcal{L}}_{\mathbb{T}^{n}_{s_{\nu}}\times \Pi_{\nu}}=\sup_{(\theta, \tau)
  \in \mathbb{T}^{n}_{s_{\nu}}\times \Pi_{\nu}}\|\partial_{\tau}B(\theta, \tau)\|_{h_{N}\to h_{N}}.$$
\end{itemize}
%%%%%%%%%%%%%%%%%%%%%%%%%%%%%%%%%%%%%%%%%%%%%%%%%%%%%%%%%%%%%%%%%%%%%%%%%%%%%%%%%%%%%%%
\subsection{Iterative Lemma}
In the following, for a function $f(\omega)$, denote by $\partial_{\omega}$ the derivative of $f(\omega)$ with respect to $\omega$ in Whitney's sense.
\begin{lemma}
Let $R_{0,0}=R_{0}, R_{l,0}=R_{l},$ where
$R_{0}, R_{l}$ are defined by \eqref{*}.
Assume that we have a family of Hamiltonian functions $H_{\nu}$:
\begin{equation}\label{eq26}%------------------------eq(5.1)
H_{\nu}=\sum_{j=1}^{\infty}\lambda_{j}^{(\nu)}u_{j}\overline{u}_{j}
+\sum_{l\geq \nu}^{\infty}\varepsilon_{l}\langle R_{l,\nu}u,\overline{u}\rangle,\;\nu=0,1, \ldots, m,
\end{equation}
where $R_{l,\nu}$ is operator-valued function defined on the domain $\mathbb{T}_{s_{\nu}}^{n}\times \Pi_{\nu},$ and
\begin{equation}\label{eq27}%------------------------eq(5.2)
\theta=\omega t.
\end{equation}
\begin{description}
  \item[$(A1)_{\nu}$]
  \begin{equation}\label{eq28}%------------------------eq(5.3)
  \lambda_{j}^{(0)}={\lambda_{j}}=j^{2}+M,\;
  \lambda_{j}^{(\nu)}={\lambda_{j}}+\sum_{i=0}^{\nu-1}\varepsilon_{i}\mu_{j}^{(i)},\;\;\nu\geq 1
  \end{equation}
and $\mu_{j}^{(i)}=\mu_{j}^{(i)}(\tau):\Pi_{i}\rightarrow\mathbb{R}$ with
\begin{eqnarray}%------------------------eq(5.4)
 &&|\mu_{j}^{(i)}|_{\Pi_{i}}:=\sup_{\tau\in\Pi_{i}}|\mu_{j}^{(i)}(\tau)|\leq C(i),\;
  0\leq i\leq\nu-1,\label{eq29}\\ &&|\mu_{j}^{(i)}|_{\Pi_{i}}^{\mathcal{L}}:=\sup_{\tau\in\Pi_{i}}|\partial_{\tau}\mu_{j}^{(i)}(\tau)|\leq C(i),\;
  0\leq i\leq\nu-1.\label{eq029}
  \end{eqnarray}
 \item[$(A2)_{\nu}$]
  $R_{l,\nu}=R_{l,\nu}(\theta, \tau)$ is defined in
  $\mathbb{T}^{n}_{s_{l}}\times \Pi_{\nu}$ with $l\geq \nu,$ and is analytic in $\theta$ for fixed $\tau\in \Pi_{\nu},$
  and
  \begin{equation}\label{eq30}%------------------------eq(5.5)
  \|R_{l,\nu} \|_{\mathbb{T}^{n}_{s_{l}}\times \Pi_{\nu}}\leq C(\nu),
  \end{equation}
  \begin{equation}\label{eq31}%------------------------eq(5.6)
  \|R_{l,\nu} \|^{\mathcal{L}}_{\mathbb{T}^{n}_{s_{l}}\times \Pi_{\nu}}\leq C(\nu).
  \end{equation}
\end{description}
Then there exists a compact set $\Pi_{m+1}\subset \Pi_{m}$ with
 \begin{equation}\label{eq32}%------------------------eq(5.7)
 mes \Pi_{m+1}\geq mes \Pi_{m}-C\gamma_{m},
  \end{equation}
  and exists a symplectic coordinate changes
  \begin{equation}\label{eq33}%------------------------eq(5.8)
\Psi_{m}: \mathbb{T}^{n}_{s_{m+1}}\times \Pi_{m+1}\rightarrow \mathbb{T}^{n}_{s_{m}}\times \Pi_{m},
  \end{equation}
  \begin{equation}\label{eq33H}%------------------------eq(5.8H)
||\Psi_{m}-id ||_{h_{N}\to h_{N}}\leq \varepsilon^{1/2},\;(\theta, \tau)\in \mathbb{T}^{n}_{s_{m+1}}\times \Pi_{m+1}
  \end{equation}
  such that the Hamiltonian function $H_{m}$ is changed into
 \begin{eqnarray}\label{eq34}%------------------------eq(5.9)
H_{m+1}& \triangleq & H_{m}\circ \Psi_{m}\nonumber\\
&=&\sum_{j=1}^{\infty}\lambda_{j}^{(m+1)}u_{j}\overline{u}_{j}
+\sum_{l\geq m+1}^{\infty}\varepsilon_{l}\langle R_{l,m+1}u,\overline{u}\rangle,
  \end{eqnarray}
  which is defined on the domain $\mathbb{T}^{n}_{s_{m+1}}\times \Pi_{m+1},$
  and ${\lambda_{j}^{(m+1)}}^{,}s$ satisfy the assumptions $(A 1)_{m+1}$ and
  $R_{l, m+1}$ satisfy the assumptions $(A 2)_{m+1}.$
\end{lemma}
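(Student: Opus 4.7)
The plan is to perform one super-convergent KAM step that eliminates the leading perturbation $\e_m\langle R_{m,m}u,\bar u\rangle$ modulo a normal-form frequency correction. I would take $\Psi_m=\Phi^1_{F_m}$ to be the time-$1$ map of the Hamiltonian flow generated by a quadratic
\[F_m(\theta,u,\bar u)=\langle F^{(m)}(\theta,\tau)u,\bar u\rangle,\]
where $F^{(m)}$ is an operator-valued symbol on $h_N$. Taylor-expanding $H_m\circ\Psi_m=H_m+\{H_m,F_m\}+\tfrac12\{\{H_m,F_m\},F_m\}+\cdots$ (using $\dot\theta=\omega$ in the extended phase space) gives the usual first-order identity, and the task is to choose $F^{(m)}$ so that the first-order terms combine into a $\theta$-independent, diagonal-in-$j$ frequency shift.

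At the symbol level this is the homological equation
\[\omega\cdot\partial_\theta F^{(m)}+\mi[\Lambda^{(m)},F^{(m)}]=\e_m\bigl([R_{m,m}]-R_{m,m}\bigr),\]
with $\Lambda^{(m)}=\mathrm{diag}(\lambda_j^{(m)})$ and $[R_{m,m}]$ the diagonal-in-$j$, zero-Fourier-mode component. In matrix-Fourier form this reads
\[\bigl(\mi\langle k,\omega\rangle+\mi(\lambda_j^{(m)}-\lambda_l^{(m)})\bigr)\widehat F^{(m)}_{jl}(k)=-\e_m(\widehat R_{m,m})_{jl}(k),\qquad(j,l,k)\neq(j,j,0).\]
Setting $\widehat F^{(m)}_{jj}(0)=0$, the resonant remainder absorbs into $\mu_j^{(m)}(\tau):=(\widehat R_{m,m})_{jj}(0,\tau)$, so that $\lambda_j^{(m+1)}=\lambda_j^{(m)}+\e_m\mu_j^{(m)}$ satisfies $(A1)_{m+1}$, with \eqref{eq29}--\eqref{eq029} inherited from \eqref{eq30}--\eqref{eq31}. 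I would truncate at $|k|\le K_m$ and define
\[\Pi_{m+1}=\Bigl\{\tau\in\Pi_m:\bigl|\langle k,\omega\rangle+\lambda_j^{(m)}-\lambda_l^{(m)}\bigr|\ge\frac{\gamma_m}{(1+|k|)^{n+1}},\ 0<|k|\le K_m,\ (j,l,k)\neq(j,j,0)\Bigr\};\]
a standard Lebesgue estimate, using the Lipschitz $\tau$-dependence of $\lambda_j^{(m)}$ from $(A1)_m$ and the fact that for $j\neq l$ one has $|\lambda_j^{(m)}-\lambda_l^{(m)}|\ge|j^2-l^2|-O(\e)$, yields $\mathrm{mes}\,\Pi_{m+1}\ge\mathrm{mes}\,\Pi_m-C\gamma_m$, which is \eqref{eq32}.

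The $h_N\to h_N$ operator-norm estimate of $F^{(m)}$ on $\mathbb{T}^n_{s_{m+1}}$ combines Lemma \ref{lem7.2} (to sum the Fourier weight $(1+|k|)^{n+1}e^{-|k|(s_m-s_{m+1})}$ over $|k|\le K_m$, producing a factor $\preceq(s_m-s_{m+1})^{-(2n+1)}$) with Lemma \ref{lem7.3} together with Remark \ref{rem1} (to convert the $1/|\lambda_j^{(m)}-\lambda_l^{(m)}|\preceq 1/|j-l|$ decay of off-diagonal matrix entries into a genuine operator bound in the weighted space). The outcome is a bound of the form
\[\|F^{(m)}\|_{\mathbb{T}^n_{s_{m+1}}\times\Pi_{m+1}}^{\mathcal{L}}\preceq\frac{\e_m}{\gamma_m(s_m-s_{m+1})^{2n+1}},\]
which for small $\e$ is well below $\e^{1/2}$. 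Standard Hamiltonian-flow estimates then show that $\Psi_m$ is a well-defined symplectomorphism $\mathbb{T}^n_{s_{m+1}}\times\Pi_{m+1}\to\mathbb{T}^n_{s_m}\times\Pi_m$ satisfying \eqref{eq33H}.

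The step I expect to demand the most care is the regrouping of the new perturbation as $\sum_{l\ge m+1}\e_l\langle R_{l,m+1}u,\bar u\rangle$ with each $R_{l,m+1}$ satisfying $(A2)_{m+1}$. Three kinds of contributions must be distributed: (a) the old terms $\e_l R_{l,m}$ with $l\ge m+1$, pulled back by $\Psi_m$, which stay uniformly bounded on $\mathbb{T}^n_{s_l}\times\Pi_{m+1}$ (since $\Psi_m$ is $\e^{1/2}$-close to the identity) and supply $R_{l,m+1}$ for each such $l$; (b) the Fourier truncation remainder $\e_m R_{m,m}^{|k|>K_m}$, of $s_{m+1}$-norm $\preceq\e_m e^{-K_m(s_m-s_{m+1})}$; (c) the second-order Poisson-bracket terms of size $\preceq\e_m^2/\gamma_m^2$. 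The parameter choices $\e_\nu=\e^{(4/3)^\nu}$, $K_\nu=10\,s_\nu^{-1}(4/3)^\nu|\log\e|$, $s_\nu=\e_{\nu+1}^{1/N}$ are calibrated exactly so that both (b) and (c) absorb into a contribution of size $\preceq\e_{m+1}$ defined on the strip $\mathbb{T}^n_{s_{m+1}}$, while the approximation bounds \eqref{cite3.3}--\eqref{cite3.4} match $s_{l-1}^N\sim\e_l$ to the series scaling. This quantitative matching is where the hypothesis $N\ge 80n$ enters, and it is precisely the device that lets the finite-smooth case be handled within the analytic KAM template.
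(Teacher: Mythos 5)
Your sketch follows the same architecture as the paper: quadratic generating Hamiltonian, homological equation truncated at $|k|\le K_m$, $[R_{mm}]$ absorbed into the normal form $\mu_j^{(m)}=\widehat R_{mmjj}(0)$, flow estimates for $\Psi_m$, and the three-part regrouping of the new perturbation (truncation tail, second-order brackets, pull-back of $\sum_{l\ge m+1}\varepsilon_l R_{l,m}$). The parameter calibration you describe is also the paper's. So the route is not genuinely different.

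There is, however, an internal inconsistency in your non-resonance condition. You define $\Pi_{m+1}$ by requiring $|\langle k,\omega\rangle+\lambda_j^{(m)}-\lambda_l^{(m)}|\ge\gamma_m/(1+|k|)^{n+1}$, a lower bound independent of $|j-l|$. You then appeal to Lemma \ref{lem7.3} (with Remark \ref{rem1}) to promote the entrywise estimates of $F^{(m)}_{jl}$ to an $h_N\to h_N$ operator bound, justifying this by the claimed decay $1/|\lambda_j^{(m)}-\lambda_l^{(m)}|\preceq 1/|j-l|$. But the denominator in $\widehat F^{(m)}_{jl}(k)$ is the \emph{small divisor} $\langle k,\omega\rangle+\lambda_j^{(m)}-\lambda_l^{(m)}$, not $\lambda_j^{(m)}-\lambda_l^{(m)}$; in the near-resonant regime where $\langle k,\omega\rangle$ nearly cancels $\lambda_l^{(m)}-\lambda_j^{(m)}$, the only available lower bound is your Diophantine one, which supplies no factor of $|j-l|$. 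Consequently $|\widehat F^{(m)}_{jl}(k)|\le\frac{(1+|k|)^{n+1}}{\gamma_m}|\widehat R_{mmjl}(k)|$, an entrywise bound that neither matches the hypothesis of Lemma \ref{lem7.3} nor by itself implies an $h_N\to h_N$ operator bound. The paper avoids this by building the factor $(|i-j|+1)$ directly into the excised resonance zones $Q^{(m)}_{kij}$ in \eqref{eq7.5}, taking $A_k=|k|^{n+3}$ rather than $|k|^{n+1}$ to keep the measure estimate summable; this gives $|\widehat F_{ij}(k)|\preceq\frac{|k|^{n+3}}{\gamma_m(|i-j|+1)}|\widehat R_{mmij}(k)|$, which is exactly the form required for Lemma \ref{lem7.3}. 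Your version would need either the same modification, or a separate case split (automatic growth of the small divisor when $|j^2-l^2|\gg|\langle k,\omega\rangle|$, and $|j-l|\lesssim|k|$ in the complementary regime, at the cost of an extra power of $|k|$), neither of which is present in the sketch.
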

%%%%%%%%%%%%%%%%%%%%%%%%%%%%%%%%%%%%%%%%%%%%%%%%%%%%%%%%%%%%%%%%%%%%%%%%%%%%%%%%%%%%%%%%%%%%%%%%%
\subsection{Derivation of homological equations}
Our end is to find a symplectic transformation $\Psi_{\nu}$ such that the terms ${R_{l, v}}$
(with $l=v$) disappear. To this end, let $F$ be a linear Hamiltonian of the form
 \begin{equation}\label{eq35}%------------------------eq(5.10)
F=\langle F(\theta, \tau)u, \overline{u}\rangle,
  \end{equation}
where $\theta=\omega t,$ $(F(\theta, \tau))^{T}=F(\theta, \tau).$
Moreover, let
\begin{equation}\label{eq36}%------------------------eq(5.11)
\Psi=\Psi_{m}=X_{\varepsilon_{m}F}^{t}\big | _{t=1},
 \end{equation}
 where $X^{t}_{\varepsilon_{m}F}$ is the flow of the Hamiltonian. Vector field $X_{\varepsilon_{m}F}$
 of the Hamiltonian $\varepsilon_{m}F$ with the symplectic structure $\sqrt{-1} du\Lambda d\overline{u}.$
 Let
 \begin{equation}\label{eq37}%------------------------eq(5.12)
H_{m+1}=H_{m}\circ \Psi_{m}.
 \end{equation}
 By \eqref{eq26}, we write
 \begin{equation}\label{eq38}%------------------------eq(5.14)
H_{m}=N_{m}+R_{m},
 \end{equation}
 with
\begin{equation}\label{eq39}%------------------------eq(5.15)
N_{m}=\sum_{j=1}^{\infty}\lambda_{j}^{(m)}u_{j}\overline{u}_{j},
 \end{equation}
 \begin{equation}\label{eq40}%------------------------eq(5.16)
R_{m}=\sum_{l=m}^{\infty}\varepsilon_{l}R_{lm}
=\sum_{l=m}^{\infty}\varepsilon_{l}\langle R_{l,m}(\theta)u, \overline{u}\rangle,
 \end{equation}
 where $(R_{l,m}(\theta))^{T}=R_{l,m}(\theta).$
 Since the Hamiltonian $H_{m}=H_{m}(\omega t, u, \overline{u})$ depends on time $t,$ we introduce a fictitious
 action $I=$ constant, and let $\theta=\omega t$ be angle variable. Then the non-autonomous $H_{m}(\omega t, u, \overline{u})$ can be written as
 $$\omega I+H_{m}(\theta, u, \overline{u})$$
 with symplectic structure $d I\Lambda d\theta +\sqrt{-1} d u\Lambda d \overline{u}$.
 By combination of \eqref{eq35}-\eqref{eq40} and Taylor formula, we have
 \begin{eqnarray}\label{eq42}%------------------------eq(5.18)
H_{m+1}&=&H_{m}\circ X^{1}_{\varepsilon_{m}F}%=(\omega I+H_{m})\circ X^{1}_{\varepsilon_{m}F}
\nonumber\\
&=&N_{m}+\varepsilon_{m}\{N_{m},F\}
+\varepsilon^{2}_{m}\int_{0}^{1}(1-\tau)\{\{N_{m}, F\}, F\}\circ X^{\tau}_{\varepsilon_{m}F}d\tau
+\varepsilon_{m}\omega\cdot\partial_{\theta}F\nonumber\\
&&+\varepsilon_{m}R_{mm}+(\sum_{l=m+1}^{\infty}\varepsilon_{l}R_{lm})\circ X^{1}_{\varepsilon_{m}F}
+\varepsilon_{m}^{2}\int_{0}^{1}\{R_{mm}, F\}\circ X^{\tau}_{\varepsilon_{m} F}d\tau,
 \end{eqnarray}
 where $\{\cdot, \cdot\}$ is the Poisson bracket with respect to $\sqrt{-1} du\Lambda d \overline{u},$
  that is $$\{H(u, \overline{u}), F(u, \overline{u})\}=
  -\sqrt{-1}\left(\frac{\partial H}{\partial u}\cdot\frac{\partial F}{\partial \overline{u}}
  -\frac{\partial H}{\partial \overline{u}}\cdot\frac{\partial F}{\partial u}\right).$$
 Let $\Gamma_{K_{m}}$ be a truncation operator.
For any
$$
 f(\theta)=\sum_{k\in \mathbb{Z}^{n}}\widehat{f}(k)e^{i\langle k, \theta\rangle},\;\theta\in\mathbb{T}^{n}.
$$
 Define, for given $K_{m}>0,$
 $$\Gamma_{K_{m}} f(\theta)=(\Gamma_{K_{m}} f)(\theta)\triangleq \sum_{|k|\leq K_{m}}\widehat{f}(k)e^{i\langle k, \theta\rangle},$$
 $$(1-\Gamma_{K_{m}}) f(\theta)=((1-\Gamma_{K_{m}}) f)(\theta)\triangleq \sum_{|k|> K_{m}}\widehat{f}(k)e^{i\langle k, \theta\rangle}.$$
 Then
 $$f(\theta)=\Gamma_{K_{m}} f(\theta)+(1-\Gamma_{K_{m}})f(\theta).$$
 Let
 \begin{equation}\label{eq43}%------------------------eq(5.19)
\omega\cdot\partial_{\theta}F+\{N_{m}, F\}+\Gamma_{K_{m}}R_{mm}=\langle[R_{mm}]u, \overline{u}\rangle,
  \end{equation}
  where
 \begin{equation}\label{eq44}%------------------------eq(5.20)
  [R_{mm}]:=diag \left(\widehat{R}_{mmjj}(0): j=1, 2, \ldots\right),
  \end{equation}
  and $R_{mmij}(\theta)$ is the matrix element of $R_{mm}$ and $\widehat{R}_{mmij}(k)$ is the
  $k$-Fourier coefficient of $R_{mmij}(\theta).$
  Then
  \begin{equation}\label{eq45}%------------------------eq(5.21)
  H_{m+1}=N_{m+1}+C_{m+1}R_{m+1},
  \end{equation}
  where
  \begin{equation}\label{eq46}%------------------------eq(5.22)
 N_{m+1}=N_{m}+\varepsilon_{m}\langle [R_{mm}]u, \overline{u}\rangle
 =\sum_{j=1}^{\infty}\lambda_{j}^{(m+1)}u_{j}\overline{u}_{j},
  \end{equation}
\begin{equation}\label{eq5.23}%------------------------eq(5.23)
\lambda_{j}^{(m+1)}=\lambda_{j}^{(m)}+\varepsilon_{m}\widehat{R}_{mmjj}(0)
={\lambda_{j}}+\sum_{l=1}^{m}\varepsilon_{l}\mu_{j}^{(l)},
\;\mu_{j}^{(m)}:=\widehat{R}_{mmjj}(0).
  \end{equation}
  \begin{eqnarray}
 C_{m+1}R_{m+1}&=&\varepsilon_{m}(1-\Gamma_{K_{m}})R_{mm}\label{eq5.24}\\%------------------------eq(5.24)
 &+&\varepsilon_{m}^{2}\int_{0}^{1}(1-\tau)\{\{N_{m}, F\},F\}\circ X_{\varepsilon_{m}F}^{\tau}d\tau\label{eq5.27}\\%------------------------eq(5.27)
 &+&\varepsilon_{m}^{2}\int_{0}^{1}\{R_{mm}, F\}\circ X^{\tau}_{\varepsilon_{m} F}d\tau\label{eq5.28}\\%------------------------eq(5.28)
  &+&\left(\sum_{l=m+1}^{\infty}\varepsilon_{l}R_{lm}\right)\circ X_{\varepsilon_{m}F}^{1}.\label{eq5.25}%------------------------eq(5.25)
  \end{eqnarray}
  The equation \eqref{eq43} is called homological equation. Developing the Poisson bracket $\{N_{m},F\}$ and comparing
  the coefficients of $u_{i}\overline{u}_{j} (i, j=1, 2, \ldots),$ we get
  \begin{equation}\label{eq48}
 \omega\cdot\partial_{\theta} F(\theta, \tau)
       +\sqrt{-1}(F(\theta, \tau)\Lambda^{(m)}-\Lambda^{(m)}F(\theta, \tau))
=\Gamma_{K_{m}}R_{mm}(\theta)-[R_{mm}],
\end{equation}
where
\begin{equation}\label{eq49}%------------------------eq(5.30)
\Lambda^{(m)}=diag(\lambda_{j}^{(m)}: j=1, 2, \ldots),
\end{equation}
and we assume
$\Gamma_{K_{m}}F(\theta, \tau)=F(\theta, \tau).$
Write $F_{ij}(\theta)$ is the matrix elements of $F(\theta, \tau)$ .
Then \eqref{eq48} can be rewritten as:
    \begin{equation}\label{eq5.33}%------------------------eq(5.33)
\omega\cdot\partial_{\theta} F_{ij}(\theta)
       -\sqrt{-1}(\lambda_{i}^{(m)}-\lambda_{j}^{(m)})F_{ij}(\theta)=
       \Gamma_{K_{m}}R_{mmij}(\theta),\;
i\neq j,
\end{equation}%------------------------eq(5.33)
 \begin{equation}\label{eq5.34}%------------------------eq(5.34)
\omega\cdot\partial_{\theta} F_{ii}(\theta)=\Gamma_{K_{m}}R_{mmii}(\theta)-\widehat{R}_{mmii}(0),%------------------------eq(5.34)
\end{equation}
where $i,j=1, 2, \ldots.$
%%%%%%%%%%%%%%%%%%%%%%%%%%%%%%%%%%%%%%%%%%%%%%%%%%%%%%%%%%%%%%%%%%%%%%%%%%%%%%%%%%%%%%%%%%%%%%%%%%
\subsection{Solutions of the homological equations}
\begin{lemma}\label{lem7.1}
There exists a compact subset $\Pi_{m+1}\subset \Pi_{m}$ with
\begin{equation}\label{eq7.1}%------------------------eq(7.1)
mes(\Pi_{m+1})\geq mes \Pi_{m}-C\gamma_{m}
\end{equation}
such that for any $\tau\in \Pi_{m+1}$ (Recall $\omega=\tau\omega_{0}$),
the equation \eqref{eq5.33} has a unique solution $F(\theta, \tau),$
which is defined on the domain $\mathbb{T}_{s_{m+1}}^{n}\times \Pi_{m+1},$
with
\begin{equation}\label{eq7.2}%------------------------eq(7.2)
\|F(\theta,\tau)\|_{\mathbb{T}_{s_{m+1}}^{n}\times
\Pi_{m+1}}\leq C(m+1)\varepsilon_{m}^{-\frac{2(2n+3)}{N}},
\end{equation}
\begin{equation}\label{eq7.3}%------------------------eq(7.3)
\|F(\theta,\tau)\|^{\mathcal{L}}_{\mathbb{T}_{s_{m+1}}^{n}\times
\Pi_{m+1}}\leq C(m+1)\varepsilon_{m}^{-\frac{4(2n+3)}{N}}.
\end{equation}
\end{lemma}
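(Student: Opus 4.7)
The plan is to solve the homological equations \eqref{eq5.33}--\eqref{eq5.34} Fourier coefficient by Fourier coefficient. Expanding $F_{ij}(\theta)=\sum_{|k|\le K_m}\widehat F_{ij}(k)\,e^{\mi\langle k,\theta\rangle}$ and matching with $\Gamma_{K_m}R_{mmij}$, each equation reduces to the algebraic relation $D_{ij}(k,\tau)\,\widehat F_{ij}(k)=\widehat R_{mmij}(k)$ with small divisor $D_{ij}(k,\tau):=\mi\langle k,\tau\omega_0\rangle-\mi(\lambda_i^{(m)}(\tau)-\lambda_j^{(m)}(\tau))$. Thus the proof splits into identifying a set $\Pi_{m+1}\subset\Pi_m$ on which $|D_{ij}(k,\tau)|$ is quantitatively bounded below for all admissible $(k,i,j)$, and then turning the resulting formal solution into the claimed $h_N\to h_N$ operator bounds.

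For the excision I would define $\Pi_{m+1}$ by imposing, for every $0<|k|\le K_m$ and every pair $(i,j)$, a non-resonance condition of the form $|D_{ij}(k,\tau)|\ge \gamma_m/\bigl(|k|^{n+1}(1+|i^2-j^2|)\bigr)$. Since $|\lambda_i^{(m)}-\lambda_j^{(m)}|=|i^2-j^2|+O(\e)$, this condition is automatic whenever $|i^2-j^2|$ exceeds $2|k|\,\|\omega_0\|+O(\e)$, so genuine excision is needed only for the near-resonant pairs with $|i-j|(i+j)\lesssim|k|$, of which there are at most $O(|k|\log|k|)$ per $k$. For each such triple the derivative $\partial_\tau D_{ij}(k,\tau)=\mi\langle k,\omega_0\rangle+O(\e)$ is bounded below by $c\gamma/|k|^{n+1}$ by \eqref{eq3} and $(A1)_m$, which controls the one-dimensional Lebesgue measure of each bad slice in $\tau$. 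Summing over $|k|\le K_m$ and over the near-resonant $(i,j)$ yields the bound \eqref{eq7.1}, provided the numerical exponents are chosen compatibly with the parameter scheme $K_m\sim s_m^{-1}(\tfrac43)^m|\log\e|$, $\gamma_m=\gamma/2^m$.

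The main technical obstacle is the passage from the scalar bound $|\widehat F_{ij}(k)|\le|k|^{n+1}(1+|i^2-j^2|)\gamma_m^{-1}|\widehat R_{mmij}(k)|$ to an $h_N\to h_N$ operator estimate on $F(\theta,\tau)$: the factor $|i^2-j^2|=|i-j|(i+j)$ would destroy the matrix norm if handled termwise. To overcome this I would exploit the algebraic factorisation $\lambda_i^{(m)}-\lambda_j^{(m)}=(i-j)(i+j)+O(\e)$ to pull out a factor $|i-j|^{-1}$ from the small divisor, and then apply Lemma \ref{lem7.3} together with Remark \ref{rem1} to the matrix with entries $\widehat R_{mmij}(k)$; this is exactly the Pöschel-type device that converts an $A_{ij}/|i-j|$ matrix back into a bounded operator on the weighted $\ell^2$-space. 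Summing the resulting Fourier series over $|k|\le K_m$ on the smaller strip $\mathbb{T}^n_{s_{m+1}}$, using the analyticity estimate $\|\widehat R_{mm}(k)\|\lesssim C(m)\,e^{-|k|s_m}$ from $(A2)_m$ together with Lemma \ref{lem7.2} to handle $\sum_k|k|^{n+1}e^{-|k|(s_m-s_{m+1})}$, and inserting the parameter choices above produces \eqref{eq7.2} with exponent $-2(2n+3)/N$.

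Finally, the Lipschitz estimate \eqref{eq7.3} follows by differentiating the explicit formula $\widehat F_{ij}(k,\tau)=\widehat R_{mmij}(k)/D_{ij}(k,\tau)$ in $\tau$: the resulting expression $(\partial_\tau\widehat R_{mmij})/D_{ij}-\widehat R_{mmij}(\partial_\tau D_{ij})/D_{ij}^2$ carries a second small-divisor factor, thereby doubling the loss in the exponent from $-2(2n+3)/N$ to $-4(2n+3)/N$, while the operator-norm step is handled exactly as in the sup-norm case. In summary, the one genuine difficulty is the combination of Lemma \ref{lem7.3} with the factorisation of $\lambda_i^{(m)}-\lambda_j^{(m)}$; the remaining ingredients -- the analytic loss $s_m-s_{m+1}$, the Diophantine budget $\gamma_m$, the truncation at $K_m$, and the rapidly decreasing strip widths -- are routine bookkeeping that fit cleanly into the iteration scheme.
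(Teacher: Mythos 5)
Your strategy matches the paper's: solve the homological equation mode by mode, excise $\tau$ on which the small divisor is too small, convert the entrywise bound on $\widehat F_{ij}(k)$ into an $h_N\to h_N$ operator bound via Lemma \ref{lem7.3} and Remark \ref{rem1}, and get the Lipschitz bound by differentiating the explicit formula. However, the non-resonance condition you impose is set up in the wrong direction, and the operator-norm step does not close as written.

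You impose $|D_{ij}(k,\tau)|\ge\gamma_m/\bigl(|k|^{n+1}(1+|i^2-j^2|)\bigr)$, so the entrywise bound $|\widehat F_{ij}(k)|\le|k|^{n+1}(1+|i^2-j^2|)\gamma_m^{-1}|\widehat R_{mmij}(k)|$ carries a \emph{growing} factor $1+|i^2-j^2|$, which Lemma \ref{lem7.3} cannot absorb. The paper's condition \eqref{eq7.5} puts $(|i-j|+1)$ in the \emph{numerator} of the threshold, $|D_{ij}|\ge(|i-j|+1)\gamma_m/|k|^{n+3}$, so the entrywise bound automatically carries the \emph{decaying} factor $(|i-j|+1)^{-1}$ that Lemma \ref{lem7.3} requires, uniformly in $i,j,k$. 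This is still cheap to impose because the condition is automatic once $|i^2-j^2|\gtrsim|k|$ (the divisor is then dominated by $\lambda_i^{(m)}-\lambda_j^{(m)}\sim i^2-j^2$), so only near-resonant pairs with $|i^2-j^2|\lesssim|k|$ are ever excised. You notice the issue and propose to ``pull out $|i-j|^{-1}$ from the small divisor'' via the factorisation of $\lambda_i^{(m)}-\lambda_j^{(m)}$; that is the right instinct, but it is left as an aspiration. To make it rigorous with your threshold you would have to split cases: when $|i^2-j^2|>C|k|$ the divisor is automatically $\gtrsim|i^2-j^2|\ge|i-j|$, giving the decay for free; when $|i^2-j^2|\le C|k|$ one has $|i-j|\le C|k|$, so a factor $(|i-j|+1)^{-1}$ can be reinserted only at the price of an extra power of $|k|$. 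The paper's design of the excision set is precisely what makes this case split unnecessary.

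A second, smaller gap is the derivative bound in your measure estimate: taking $|\partial_\tau D_{ij}|\ge c\gamma/|k|^{n+1}$ is too weak, since the resulting sum over $|k|\le K_m$ (with $K_m\to\infty$) and over the near-resonant $(i,j)$ is then not $O(\gamma_m)$. The paper avoids this by factoring out $\tau$ and differentiating $-\langle k,\omega_0\rangle+(i^2-j^2)/\tau$ instead, whose $\tau$-derivative has modulus $|i^2-j^2|/\tau^2\ge\tfrac14|i^2-j^2|$ --- a much stronger, $k$-independent lower bound which, together with $A_k=|k|^{n+3}$, makes the excised measure $O(\gamma_m)$.
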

%%%%%%%%%%%%%%%%%%%proof%%%%%%%%%%%%%%%%%%%%%%%%%%
\begin{proof}
By passing to Fourier coefficients, \eqref{eq5.33} can be rewritten as
\begin{equation}\label{eq7.4}
(-\langle k, \omega\rangle+\lambda_{i}^{(m)}-\lambda_{j}^{(m)})\widehat{F}_{ij}(k)
=\sqrt{-1}\widehat{R}_{mmij}(k),
\end{equation}
where $i, j=1, 2, \ldots, k\in \mathbb{Z}^{n}$ with $|k|\leq K_{m}.$ Recall $\omega=\tau \omega_{0}.$
Let
$$A_{k}=|k|^{n+3},$$
and let
\begin{equation}\label{eq7.5}
Q_{kij}^{(m)}\triangleq \left\{\tau\in\Pi_{m}\bigg| \big|-\langle k, \omega_{0}\rangle\tau
+\lambda_{i}^{(m)}-\lambda_{j}^{(m)}\big|<\frac{(|i-j|+1)\gamma_{m}}{A_{k}}\right\},
\end{equation}
where $i, j=1, 2, \ldots,$ $k\in\mathbb{Z}^{n}$ with $|k|\leq K_{m},$ and $k\neq 0$ when $i=j.$
Let
\begin{equation*}%\label{eq7.6}
\Pi_{m+1}=\Pi_{m}\diagdown\bigcup_{|k|\leq K_{m}}\bigcup_{i=1}^{\infty}\bigcup_{j=1}^{\infty}Q_{kij}^{(m)}.
\end{equation*}
Then for any $\tau\in\Pi_{m+1},$ we have
\begin{equation}\label{eq7.6}
\big|-\langle k, \omega\rangle
+\lambda_{i}^{(m)}-\lambda_{j}^{(m)}\big|\geq\frac{(|i-j|+1)\gamma_{m}}{A_{k}}.
\end{equation}
Recall that $R_{mm}(\theta)$ is analytic in the domain $\mathbb{T}_{s_{m}}^{n}$
for any $\tau\in \Pi_{m},$
\begin{equation}\label{eq7.8}
|\widehat{R}_{mmij}(k)|
\leq C(m)e^{-s_{m}|k|}.
\end{equation}
It follows
\begin{eqnarray}\label{eq7.7}
|\widehat{F}_{ij}(k)|
&=&\Bigg|\frac{\widehat{R}_{mmij}(k)}{-\langle k, \omega\rangle +\lambda_{i}^{(m)}-\lambda_{j}^{(m)}}\Bigg|\leq \frac{A_{k}}{\gamma_{m}(|i-j|+1)}\cdot|\widehat{R}_{mmij}(k)|\nonumber\\
&\leq &
\frac{|k|^{n+3}}{\gamma_{m}(|i-j|+1)}\cdot C(m)e^{-s_{m}|k|}.
\end{eqnarray}
%where $|i-j|$ takes $1$ when $i=j.$

Therefore, by \eqref{eq7.7}, we have
\begin{eqnarray*}
&&\sup_{\theta\in\mathbb{T}^{n}_{s'_{m}}\times \Pi_{m+1}}|F_{ij}(\theta, \tau)|\\
&\leq &\frac{C(m)}{\gamma_{m}(|i-j|+1)}\sum_{|k|\leq K_{m}}|k|^{n+3}e^{-(s_{m}-s'_{m})|k|}\\
&\leq & \left(\frac{n+3}{e}\right)^{n+3}\!(1+e)^{n}\left(\frac{2}{s_{m}-s'_{m}}\right)^{2n+3}\!\cdot
\frac{C(m)}{\gamma_{m}(|i-j|+1)}
\;( \mbox{by Lemma 7.2})\\
&\leq & \frac{1}{(s_{m}-s'_{m})^{2n+3}}\cdot\frac{C\cdot C(m)}{\gamma_{m}(|i-j|+1)}\\
&\leq & \varepsilon_{m}^{-\frac{2(2n+3)}{N}}\cdot\frac{C\cdot C(m)}{\gamma_{m}(|i-j|+1)},
\end{eqnarray*}
where $C$ is a constant depending on $n,$ $s'_{m}=s_{m}-\frac{s_{m}-s_{m+1}}{4}.$

By Lemma \ref{lem7.3} and the Remark \ref{rem1}, we have
\begin{equation}\label{eq7.9}
\|F(\theta, \tau)\|_{\mathbb{T}^{n}_{s'_{m}}\times \Pi_{m+1}}
\leq C\cdot C(m)\gamma_{m}^{-1}\varepsilon_{m}^{-\frac{2(2n+3)}{N}}
\leq C(m+1)\varepsilon_{m}^{-\frac{2(2n+3)}{N}}.
\end{equation}
It follows $s'_{m}>s_{m+1}$ that
$$\|F(\theta, \tau)\|_{\mathbb{T}^{n}_{s_{m+1}}\times \Pi_{m+1}}
\leq\|F(\theta, \tau)\|_{\mathbb{T}^{n}_{s'_{m}}\times \Pi_{m+1}}
\leq C(m+1)\varepsilon_{m}^{-\frac{2(2n+3)}{N}}.$$
Applying $\partial_{\tau}$ to both sides of \eqref{eq7.4}, we have
\begin{equation}\label{eq7.10}
\left(-\langle k, \omega\rangle +\lambda_{i}^{(m)}-\lambda_{j}^{(m)}\right)\partial_{\tau}\widehat{F}_{ij}(k)
=\sqrt{-1}\partial_{\tau}\widehat{R}_{mmij}(k)+(*),
\end{equation}
where
\begin{equation}\label{eq7.11}
(*)=-\left(-\langle k, \omega_{0}\rangle+\partial_{\tau}(\lambda_{i}^{(m)}-\lambda_{j}^{(m)})\right)\widehat{F}_{ij}(k).
\end{equation}
Recalling $|k|\leq K_{m}=10s_{m}^{-1}(\frac{4}{3})^{m}|\log \varepsilon|,$ and using \eqref{eq28} and \eqref{eq29} with $\nu=m,$ and
using \eqref{eq7.9}, we have, on $\tau\in\Pi_{m+1},$
\begin{equation}\label{eq7.12}
|(*)|\leq C\cdot C(m)\gamma_{m}^{-1}\varepsilon_{m}^{-\frac{2(2n+3)}{N}}e^{-s'_{m}|k|}.
\end{equation}
According to \eqref{eq31},
\begin{equation}\label{eq7.13}
\mid \partial _{\tau}\widehat{R}_{mmij}(k)\mid \leq C(m)e^{-s_{m}|k|}.
\end{equation}
By \eqref{eq7.6}, \eqref{eq7.10}, \eqref{eq7.12} and \eqref{eq7.13}, we have
\begin{equation}\label{eq7.14}
\mid  \partial _{\tau}\widehat{F}_{ij}(k)\mid
\leq \frac{A_{k}}{\gamma_{m}(|i-j|+1)}\cdot C\cdot C(m)\gamma_{m}^{-1}\varepsilon_{m}^{-\frac{2(2n+3)}{N}}e^{-s'_{m}|k|} \;\;\mbox{for}\;\; i\neq j.
\end{equation}
Note that $s_{m}>s'_{m}>s_{m+1}.$ Again using Lemma \ref{lem7.2} and Lemma\ref{lem7.3}, we have
\begin{eqnarray}\label{eq7.15}
&&\|F(\theta, \tau)\|^{\mathcal{L}}_{\mathbb{T}_{s_{m+1}}\times \Pi_{m+1}}
=\|  \partial _{\tau}{F}(\theta, \tau)\|_{\mathbb{T}_{s_{m+1}}\times \Pi_{m+1}}
\nonumber\\
&\leq &C^{2}\cdot C(m)\gamma^{-2}_{m}\varepsilon_{m}^{-\frac{4 (2n+3)}{N}}\leq C(m+1)\varepsilon_{m}^{-\frac{4 \,(2n+3)}{N}}.
\end{eqnarray}
The proof of the measure estimate \eqref{eq7.1} will be postponed to section \ref{em}.
This completes the proof of Lemma \ref{lem7.1}.
\end{proof}

%%%%%%%%%%%%%%%%%%%%%%%%%%%%%%%%%%%secton 8%%%%%%%%%%%%%%%%%%%%%%%%%%%%%%%%%%%%%%%%%%%%%%%%%%%%%%%%%%%%%%%%%%%
\subsection{Coordinate change $\Psi$ by $\varepsilon_{m} F$}
Recall $\Psi=\Psi_{m}=X_{\varepsilon_{m}F}^{t}\big| _{t=1},$ where $X^{t}_{\varepsilon_{m}F}$ is the flow of the Hamiltonian $\varepsilon_{m} F$, vector field $X_{\varepsilon_{m}F}$ with symplectic $\sqrt{-1}du\Lambda d\overline{u}.$
So
$$\sqrt{-1} \dot{u}=\varepsilon_{m}\frac{\partial F}{\partial \overline{u}},\;
-\sqrt{-1}\dot{\overline{u}}=\varepsilon_{m}\frac{\partial F}{\partial {u}},\;\dot{\theta}=\omega.$$
More exactly,
$$\left\{
    \begin{array}{ll}
      \sqrt{-1}\dot{u}=\varepsilon_{m}F(\theta, \tau)u,\;\theta=\omega t,\\
     -\sqrt{-1}\dot{\overline{u}}=\varepsilon_{m}F(\theta, \tau)\overline{u},\;\theta=\omega t, \\
 \dot{\theta}=\omega.
    \end{array}
  \right.
$$

Let $z=\left(
         \begin{array}{c}
           u \\
           \overline{u} \\
         \end{array}
       \right),$
      %%%%%%%%%%%%%%%%%%%%%%%%%%%%%%%%%%%%%%%%%%%%%%%%%%%%%%%%%%%%%%%%%%%%%%%%%%%
\begin{equation}\label{eq8.0}
B_{m}=\left(
            \begin{array}{cc}
            -\sqrt{-1} F(\theta, \tau) & 0\\
             0 & \sqrt{-1} F(\theta, \tau)\\
            \end{array}
          \right).\;\;\mbox{Recall }\;\;\theta=\omega t.
\end{equation}
%where $id$ is the identity from $h_{N}\to h_{N}.$
%%%%%%%%%%%%%%%%%%%%%%%%%%%%%%%%%%%%%%%%%%%%%%%%%%%%%%%%%%%%%%%%%%%%%%%%%%%%%%%%%%%%%%%%%%%%
Then
\begin{equation}\label{eq8.1}
\frac{dz(t)}{dt}=\varepsilon_{m}B_{m}(\theta)z,\;\;\dot{\theta}=\omega.
\end{equation}
Let $z(0)=z_{0}\in h_{N}\times h_{N},\,\theta(0)=\theta_{0}\in \mathbb{T}^{n}_{s_{m+1}}$ be
initial value.
Then
\begin{equation}\label{eq8.2}
\left\{
  \begin{array}{ll}
    z(t)=z_{0}+\int_{0}^{t}\varepsilon_{m}B_{m}(\theta_{0}+\omega s)z(s)ds, \\
    \theta(t)=\theta_{0}+\omega t.
  \end{array}
\right.
\end{equation}
By Lemmas \ref{lem7.1} in Section 7,
\begin{equation}\label{eq8.3}
\|B_{m}(\theta)\|_{\mathbb{T}^{n}_{s_{m+1}}\times \Pi_{m+1}}\leq C(m+1)\varepsilon_{m}^{-\frac{2(2n+3)}{N}},
\end{equation}
\begin{equation}\label{eq8.4}
\|B_{m}(\theta)\|^{\mathcal{L}}_{\mathbb{T}^{n}_{s_{m+1}}\times \Pi_{m+1}}\leq C(m+1)\varepsilon_{m}^{-\frac{4(2n+3)}{N}}.
\end{equation}
It follows from \eqref{eq8.2} that
$$z(t)-z_{0}=\int_{0}^{t}\varepsilon_{m}B_{m}(\theta_{0}+\omega s)z_{0}ds+\int_{0}^{t}\varepsilon_{m}B_{m}(\theta_{0}+\omega s)(z(s)-z_{0})ds.$$
Moreover, for $t\in [0,1]$, $\|z_{0}\|_{N}\leq 1,$
\begin{equation}\label{eq8.5}
\|z(t)-z_{0}\|_{N}\leq \varepsilon_{m}C(m+1)\varepsilon_{m}^{-\frac{4(2n+3)}{N}}
+\int_{0}^{t}\varepsilon_{m}\|B_{m}(\theta_{0}+\omega s)\|\|z(s)-z_{0}\|_{N}ds,
\end{equation}
where $\|\cdot\|$ is the operator norm from $h_{N}\times h_{N}\rightarrow h_{N}\times h_{N}.$

By Gronwall's inequality,
\begin{equation}\label{eq8.6}
\|z(t)-z_{0}\|_{N}\leq C(m+1)\varepsilon_{m}^{1-\frac{4(2n+3)}{N}}
\cdot\exp\left(\int_{0}^{t}\varepsilon_{m}\|B_{m}(\theta_{0}+\omega s)\|ds\right)\leq \varepsilon_{m}^{1/2}.
\end{equation}
Thus,
\begin{equation}\label{eq8.7}
\Psi_{m}: \mathbb{T}^{n}_{s_{m+1}}\times \Pi_{m+1}\rightarrow \mathbb{T}^{n}_{s_{m}}\times \Pi_{m},
\end{equation}
and
\begin{equation}\label{eq8.8}
\|\Psi_{m}-id\|_{h_{N}\to h_{N}}\leq \varepsilon_{m}^{1/2}.
\end{equation}
Since \eqref{eq8.1} is linear, so $\Psi_{m}$ is linear coordinate change. According to \eqref{eq8.2}, construct
Picard sequence:
$$\left\{
    \begin{array}{ll}
      z_{0}(t)=z_{0}, \\
      z_{j+1}(t)=z_{0}+\int_{0}^{t}\varepsilon_{m}B(\theta_{0}+\omega s)z_{j}(s)ds,\;j=0, 1, 2,\ldots.
    \end{array}
  \right.
$$
By \eqref{eq8.8}, this sequence with $t=1$ goes to
\begin{equation}\label{eq8.10}
\Psi_{m}(z_{0})=z(1)=(id+P_{m}(\theta_{0}))z_{0},
\end{equation}
where $id$ is the identity from $h_{N}\times h_{N}\rightarrow h_{N}\times h_{N},$ and $P(\theta_{0})$ is an operator form $h_{N}\times h_{N}\rightarrow h_{N}\times h_{N}$
for any fixed $\theta_{0}\in \mathbb{T}^{n}_{{s_{m+1}}}, \tau\in\Pi_{m+1},$
and is analytic in $\theta_{0}\in \mathbb{T}^{n}_{{s_{m+1}}},$ with
\begin{equation}\label{eq8.11}
\|P_{m}(\theta_{0})\|_{\mathbb{T}^{n}_{s_{m+1}}\times \Pi_{m+1}}\leq \varepsilon_{m}^{1/2}.
\end{equation}
Note that \eqref{eq8.1} is a Hamiltonian system. So $P_{m}(\theta_{0})$ is a symplectic linear operator from
$h_{N}\times h_{N}$ to $h_{N}\times h_{N}.$
%%%%%%%%%%%%%%%%%%%%%%%%%%%%%%%%%%%%%%%%%%%%%%%%%%%%%%%%%%%%%%%%%%%%%%%%%%%%%%%%%%%%%%%%%%%%
\subsection{Estimates of remainders}
The aim of this section is devoted to estimate the remainders:
$$R_{m+1}=\eqref{eq5.24}+\ldots + \eqref{eq5.25}.$$
\begin{itemize}
  \item Estimate of \eqref{eq5.24}.

By \eqref{eq40}, let
$$\widetilde{R}_{mm}=\widetilde{R}_{mm}(\theta)=\left(
            \begin{array}{cc}
           0 & \frac{1}{2}R_{m,m}(\theta) \\
              \frac{1}{2}R_{m,m}(\theta) & 0\\
            \end{array}
          \right),$$
then $$R_{mm}=\langle \widetilde{R}_{mm}\left(
                                          \begin{array}{cc}
                                            u \\
                                            \overline{u}
                                          \end{array}
                                        \right)
, \left(
 \begin{array}{cc}
  u \\
  \overline{u}
  \end{array}
  \right)\rangle.$$ So
$$(1-\Gamma_{K_{m}})R_{mm}\triangleq
\langle (1-\Gamma_{K_{m}})\widetilde{R}_{mm}
\left(
          \begin{array}{c}
   u \\
\overline{u} \\
     \end{array}
      \right),
\left(
          \begin{array}{c}
   u \\
\overline{u} \\
     \end{array}
      \right)
\rangle .$$
By the definition of truncation operator $\Gamma_{K_{m}},$
$$(1-\Gamma_{K_{m}})\widetilde{R}_{mm}=\sum_{|k|> K_{m}}\widehat{\widetilde{R}}_{mm}(k)e^{i \langle k, \theta\rangle},
\;\theta\in \mathbb{T}^{n}_{s_{m}},\;\tau\in \Pi_{m}.$$
Since $\widetilde{R}_{mm}=\widetilde{R}_{mm}(\theta)$ is analytic in $\theta\in \mathbb{T}^{n}_{s_{m}},$
\begin{eqnarray*}
&&\sup_{(\theta, \tau)\in\mathbb{T}^{n}_{s_{m+1}}\times \Pi_{m+1}}\|(1-\Gamma_{K_{m}})\widetilde{R}_{mm}\|_{l_{N}\to l_{N}}^{2}
\leq\sum_{|k|> K_{m}}\|\widehat{\widetilde{R}}_{mm}(k)\|_{N}^{2}e^{2|k|s_{m+1}}\\
&&\leq\|\widetilde{R}_{mm}\|_{\mathbb{T}_{s_{m}}^{n}\times \Pi_{m}}^{2}\sum_{|k|>K_{m}}e^{-2(s_{m}-s_{m+1})|k|}
%\leq 2\|J\widetilde{R}_{mm}J\|_{\mathbb{T}^{n}_{s_{m}}\times \Pi_{m}}^{2}e^{-2K_{m}(s_{m}-s_{m+1})}
\\
&&\leq \frac{C^{2}(m)e^{-2K_{m}(s_{m}-s_{m+1})}}{\varepsilon_{m}}\;\mbox{(by \eqref{eq30}}\\
&&\leq C^{2}(m)\varepsilon^{2}_{m}.
\end{eqnarray*}
That is,
$$\|(1-\Gamma_{K_{m}})\widetilde{R}_{mm}\|_{\mathbb{T}^{n}_{s_{m+1}}\times \Pi_{m+1}}\leq \varepsilon_{m}C(m).$$
Thus,
\begin{equation}\label{eq9.1}
\|\varepsilon_{m}(1-\Gamma_{K_{m}})\widetilde{R}_{mm}\|_{\mathbb{T}^{n}_{s_{m+1}}\times \Pi_{m+1}}
\leq \varepsilon^{2}_{m}C(m)\leq \varepsilon_{m+1}C(m+1).
\end{equation}
  \item Estimate of \eqref{eq5.28}.

Let
\begin{equation*}%\label{eq8.0}
S_{m}=\left(
            \begin{array}{cc}
             0 & \frac{1}{2}F(\theta, \tau) \\
            \frac{1}{2} F(\theta, \tau) & 0\\
            \end{array}
          \right),
\;\;\mathcal{J}=\left(\begin{array}{cc}
             0& -\sqrt{-1}id \\
            \sqrt{-1}id & 0\\
            \end{array}
          \right).
\end{equation*}
Then we can write
$$F=\langle S_{m}(\theta)\left(
                       \begin{array}{c}
                         u \\
                         \overline{u} \\
                       \end{array}
                     \right),
\left(
                       \begin{array}{c}
                         u \\
                         \overline{u} \\
                       \end{array}
                     \right)
\rangle =\langle S_{m}z, z\rangle,\;z=\left(
                       \begin{array}{c}
                         u \\
                         \overline{u} \\
                       \end{array}
                     \right).$$
Then
\begin{equation}\label{eq9.2}
%(6.16)=
\varepsilon_{m}^{2}\{R_{mm}, F\}
=4\varepsilon_{m}^{2} \langle {\widetilde{R}}_{mm}(\theta)JS_{m}(\theta)u, u\rangle.
\end{equation}
Noting $\mathbb{T}^{n}_{s_{m}}\times \Pi_{m}\supset \mathbb{T}^{n}_{s_{m+1}}\times \Pi_{m+1}.$ By \eqref{eq30} and \eqref{eq31} with $l=m, v=m,$
\begin{equation}\label{eq9.02}
\|\widetilde{R}_{mm}(\theta)\|_{\mathbb{T}^{n}_{s_{m+1}}\times \Pi_{m+1}}
\leq \|\widetilde{R}_{mm}(\theta)\|_{\mathbb{T}^{n}_{s_{m}}\times \Pi_{m}}\leq C(m),
\end{equation}
\begin{equation}\label{eq9.002}
\|\widetilde{R}_{mm}(\theta)\|^{\mathcal{L}}_{\mathbb{T}^{n}_{s_{m+1}}\times \Pi_{m+1}}\leq C(m).
\end{equation}
Let $\widetilde{S}_{m}(\theta)=\mathcal{J}S_{m}(\theta).$
Then by Lemmas \ref{lem7.1} in Section 7, we have
\begin{equation}\label{eq8.04}
\|\widetilde{S}_{m}(\theta)\|_{\mathbb{T}^{n}_{s_{m+1}}\times \Pi_{m+1}}\leq C(m+1)\varepsilon_{m}^{-\frac{2(2n+3)}{N}},
\end{equation}
\begin{equation}\label{eq8.05}
\| \widetilde{S}_{m}(\theta)\|^{\mathcal{L}}_{\mathbb{T}^{n}_{s_{m+1}}\times \Pi_{m+1}}\leq C(m+1)\varepsilon_{m}^{-\frac{4(2n+3)}{N}},
\end{equation}
and
\begin{equation}\label{eq9.3}
\|\widetilde{R}_{mm}JS_{m}\|_{\mathbb{T}^{n}_{s_{m+1}}\times \Pi_{m+1}}=\|\widetilde{R}_{mm}\widetilde{S}_{m}\|_{\mathbb{T}^{n}_{s_{m+1}}\times \Pi_{m+1}}\leq C(m)C(m+1)\varepsilon_{m}^{-\frac{2(2n+3)}{N}}.
\end{equation}
Note that the vector field is linear. So, by Taylor formula, one has
$$
\eqref{eq5.28}=\varepsilon_{m}^{2}\langle \widetilde{R}^{*}_{m}(\theta)u, u\rangle, $$
where
$$\widetilde{R}^{*}_{m}(\theta)=\sum_{j=1}^{\infty}\frac{4^{j}\varepsilon_{m}^{j-1}}{j!}\cdot\widetilde{R}_{mm}\underbrace{\widetilde{S}_{m}\cdots \widetilde{S}_{m}}_{j-fold}.$$
By \eqref{eq9.02} and \eqref{eq8.04},
\begin{eqnarray*}
\|\widetilde{R}^{*}_{m}(\theta)\|_{\mathbb{T}^{n}_{s_{m+1}}\times \Pi_{m+1}}
&\leq &\sum_{j=1}^{\infty}\frac{C(m)C(m+1)\varepsilon_{m}^{j-1}(\varepsilon_{m}^{-\frac{2(2n+3)}{N}})^{j}}{j!}\\
&\leq &C(m)C(m+1)\varepsilon_{m}^{-\frac{2(2n+3)}{N}}.
\end{eqnarray*}
By \eqref{eq9.002} and \eqref{eq8.05},
$$\|\widetilde{R}^{*}_{m}(\theta)\|^{\mathcal{L}}_{\mathbb{T}^{n}_{s_{m+1}}\times \Pi_{m+1}}\leq C(m)C(m+1)\varepsilon_{m}^{-\frac{4(2n+3)}{N}}.$$
Thus,
\begin{equation}\label{eq*1}
||\varepsilon_{m}^{2}\widetilde{R}_{m}^{*}||_{\mathbb{T}^{n}_{s_{m+1}}\times \Pi_{m+1}}
\leq C(m)C(m+1)\varepsilon_{m}^{2-\frac{2(2n+3)}{N}}\leq C(m+1)\varepsilon_{m+1},
\end{equation}
and
\begin{equation}\label{eq*2}
||\varepsilon_{m}^{2}\widetilde{R}_{m}^{*}||^{\mathcal{L}}_{\mathbb{T}^{n}_{s_{m+1}}\times \Pi_{m+1}}
\leq C(m)C(m+1)\varepsilon_{m}^{2-\frac{4(2n+3)}{N}}\leq C(m+1)\varepsilon_{m+1}.
\end{equation}

   \item Estimate of \eqref{eq5.27}

By \eqref{eq43},
\begin{equation}\label{eq9.7}
\{N_{m}, F\}=\langle [R_{mm}]u, \overline{u}\rangle-\Gamma_{K_{m}}R_{mm}-\omega\cdot \partial_{\theta}F\triangleq R_{mm}^{*}.
\end{equation}
Thus,
\begin{equation}\label{eq9.8}
\eqref{eq5.27}=\varepsilon_{m}^{2}\int_{0}^{1}(1-\tau)\{R_{mm}^{*}, F\}\circ X_{\varepsilon_{m}F}^{\tau} d\tau.
\end{equation}
Note $R_{mm}^{*}$ is a quadratic polynomial in $u$ and $\overline{u}.$ So we write
\begin{equation}\label{eq9.9}
R^{*}_{mm}=\langle \mathcal{R}_{m}(\theta, \tau)z, z\rangle,\; z=\left(
                                                                 \begin{array}{c}
                                                                   u \\
                                                                   \overline{u}\\
                                                                 \end{array}
                                                               \right).
\end{equation}
By \eqref{eq29} and \eqref{eq029} with $l=\nu=m,$ and using \eqref{eq8.04} and \eqref{eq8.05},
\begin{equation}\label{eq9.10}
\|\mathcal{R}_{m}\|_{\mathbb{T}^{n}_{s_{m}+1}\times \Pi_{m+1}}\leq C(m)\varepsilon_{m}^{-\frac{2(2n+3)}{N}},\;\;
\|\mathcal{R}_{m}\|^{\mathcal{L}}_{\mathbb{T}^{n}_{s_{m}+1}\times \Pi_{m+1}}\leq C(m)\varepsilon_{m}^{-\frac{4(2n+3)}{N}},
\end{equation}
where $\|\cdot\|$ is the operator norm in $h_{N}\times h_{N}\rightarrow h_{N}\times h_{N}.$
Recall $F=\langle S_{m}(\theta, \tau)u, u\rangle.$
Set
\begin{equation}\label{eq9.11}
[\mathcal{R}_{m}, \widetilde{S}_{m}]=2\mathcal{R}_{m}\widetilde{S}_{m}=2\mathcal{R}_{m}\mathcal{J}S_{m}.
\end{equation}
Using Taylor formula to \eqref{eq9.8}, we get
\begin{eqnarray*}
\eqref{eq5.27}&=&\frac{\varepsilon_{m}^{2}}{2!}\{\{R^{*}_{mm}, F\}, F\}+\ldots +\frac{\varepsilon_{m}^{j}}{j!}
\underbrace{\{\ldots\{R^{*}_{mm}, F\},\ldots, F\}}_{j-\mbox{fold}}+\ldots\\
&=&\Bigg\langle\left(\sum_{j=2}^{\infty}\frac{\varepsilon^{j}_{m}}{j!}\underbrace{[\ldots [\mathcal{R}_{m}, \widetilde{S}_{m}], \ldots , \widetilde{S}_{m}]}_{j-\mbox{fold}}\right)u, u\Bigg\rangle\\
&\triangleq &\langle \mathcal{R}^{**}(\theta, \tau)u,u\rangle.
\end{eqnarray*}
By \eqref{eq8.04},\eqref{eq9.10} and \eqref{eq9.11}, we have
\begin{eqnarray}\label{eq9.12}
\|\mathcal{R}^{**}(\theta, \tau)\|_{\mathbb{T}^{n}_{s_{m+1}}\times \Pi_{m+1}}
&\leq & \sum_{j=2}^{\infty}\frac{1}{j!}\|\mathcal{R}_{m}(\theta, \tau)\|_{\mathbb{T}^{n}_{s_{m}}\times \Pi_{m}}
(2\|\widetilde{S}_{m}\|_{\mathbb{T}^{n}_{s_{m+1}}\times \Pi_{m+1}}\varepsilon_{m})^{j}\nonumber\\
&\leq & \sum_{j=2}^{\infty}\frac{C(m)}{j!}\left(\varepsilon_{m}C(m+1)\varepsilon_{m}^{-\frac{2(2n+3)}{N}}\right)^{j}\nonumber\\
&\leq & C(m+1)\varepsilon_{m}^{4/3}=C(m+1)\varepsilon_{m+1}.
\end{eqnarray}
Similarly,
\begin{equation}\label{eq9.13}
\|\mathcal{R}^{**}\|^{\mathcal{L}}_{\mathbb{T}^{n}_{s_{m+1}}\times \Pi_{m+1}}\leq C(m+1)\varepsilon_{m+1}.
\end{equation}
  \item Estimate of \eqref{eq5.25}
%%%%%%%%%%%%%%%%%%%%%%%%%%%%%%%%%%%%%%%%%%%%%%%%%%%%%%%%%%%%%%%%%%%%%%%%%%%%%%%%%%%%%%%%%%%%%%%%%%
\begin{eqnarray}\label{eq9.19}
\eqref{eq5.25}=\sum_{l=m+1}^{\infty}\varepsilon_{l}(R_{lm}\circ X_{\varepsilon_{m}F}^{1}).
\end{eqnarray}
Write
$$R_{lm}=\langle \widetilde{R}_{lm}(\theta)u, u\rangle.$$
Then, by Taylor formula:
$$R_{lm}\circ X_{\varepsilon_{m}F}^{1}=R_{lm}+\sum_{j=1}^{\infty}\frac{1}{j!}\langle \widetilde{R}_{lmj} u, u\rangle,$$
where
$$\widetilde{R}_{lmj}=4^{j}\widetilde{R}_{lm}(\theta)\underbrace{\widetilde{S}_{m}(\theta)\cdots \widetilde{S}_{m}(\theta)}_{j-fold}\varepsilon_{m}^{j}.$$
By \eqref{eq30}, \eqref{eq31},
$$\|\widetilde{R}_{lm}\|_{\mathbb{T}^{n}_{s_{l}}\times \Pi_{m}}\leq C(l),\;\;\|\widetilde{R}_{lm}\|^{\mathcal{L}}_{\mathbb{T}^{n}_{s_{l}}\times \Pi_{m}}\leq C(l).$$
Combing the last inequalities with \eqref{eq8.04} and \eqref{eq8.05}, one has
\begin{eqnarray*}
&&\|\widetilde{R}_{lmj}\|_{\mathbb{T}^{n}_{s_{l}}\times \Pi_{m+1}}\\
&\leq & \|\widetilde{R}_{lm}\|_{\mathbb{T}^{n}_{s_{l}}\times \Pi_{m+1}}\cdot (||\widetilde{S}_{m}||_{\mathbb{T}^{n}_{m+1}\times \Pi_{m+1}}4\varepsilon_{m})^{j}\\
&\leq & C^{2}(m)(\varepsilon_{m}\varepsilon_{m}^{-\frac{2(2n+3)}{N}})^{j},
\end{eqnarray*}
and
\begin{eqnarray*}
&&||\widetilde{R}_{lmj}||^{\mathcal{L}}_{\mathbb{T}^{n}_{s_{l}}\times \Pi_{m+1}}\\
&\leq & ||\widetilde{R}_{lm}||^{\mathcal{L}}_{\mathbb{T}^{n}_{s_{l}}\times \Pi_{m+1}}(||\widetilde{S}_{m}||_{\mathbb{T}^{n}_{s_{l}}\times \Pi_{m+1}}4\varepsilon_{m})^{j}\\
&&+
||\widetilde{R}_{lm}||_{\mathbb{T}^{n}_{s_{l}}\times \Pi_{m+1}}(||\widetilde{S}_{m}||^{\mathcal{L}}_{\mathbb{T}^{n}_{s_{l}}\times \Pi_{m+1}}\varepsilon_{m})^{j}\\
&\leq &C^{2}(m)(\varepsilon_{m}\varepsilon_{m}^{-\frac{4(2n+3)}{N}})^{j}.
\end{eqnarray*}
Thus, let
$$\overline{R}_{l,m+1}:=R_{lm}+\sum_{j=1}^{\infty}\frac{1}{j!}\widetilde{R}_{lmj},$$
then
\begin{equation}\label{eq*3}
\eqref{eq5.25}=\sum_{l=m+1}^{\infty}\varepsilon_{l}\langle \overline{R}_{l,m+1}u, u\rangle
\end{equation}
and
\begin{equation}\label{eq*4}
||\overline{R}_{l,m+1}||_{\mathbb{T}^{n}_{s_{l}}\times \Pi_{m+1}}\leq C^{2}(m)\leq C(m+1),\;\;
||\overline{R}_{l,m+1}||^{\mathcal{L}}_{\mathbb{T}^{n}_{s_{l}}\times \Pi_{m+1}}\leq C^{2}(m)\leq C(m+1).
\end{equation}
As a whole, the remainder $R_{m+1}$ can be written  as
$$C_{m+1}R_{m+1}=\sum_{l=m+1}^{\infty}\varepsilon_{l}\langle R_{l, \nu}(\theta)u,\overline{u}\rangle,\;\;\nu=m+1,$$
where $R_{l\nu}$ satisfies \eqref{eq30} and \eqref{eq31} with
$\nu=m+1, l\geq m+1.$
This shows that the Assumption $(A2)_{\nu}$ with $\nu=m+1$ holds true.

By \eqref{eq5.23},
$$\mu_{j}^{(m)}=\widehat{R}_{mmjj}^{z\overline{z}}(0).$$
In \eqref{eq30} and \eqref{eq31}, we have
\begin{eqnarray*}
&&|\mu_{j}^{(m)}|_{\Pi_{m}}\leq
|R_{mmjj}(\theta, \tau)|\leq C(m),\\
&&|\mu_{j}^{(m)}|_{\Pi_{m}}^{\mathcal{L}}\leq |\partial_{\tau}R_{mmjj}(\theta, \tau)|\leq C(m).
\end{eqnarray*}
This shows that the Assumption $(A1)_{\nu}$ with $\nu=m+1$ holds true.
\end{itemize}
%%%%%%%%%%%%%%%%%%%%%%%%%%%%%%%%%%%%%%%%%%%%%%%%%%%%%%%%%%%%%%%%%%%%%%%%%%%%%%%%%%%%%%%%%%%%%%%%%
\subsection{Estimate of measure}\label{em}
In this section, $C$ denotes a universal constant, which may be different in different places.
Now let us return to \eqref{eq7.5}
\begin{equation}\label{eq10.01}
Q_{kij}^{(m)}\triangleq \left\{\tau\in\Pi_{m}\bigg| \big|-\langle k, \omega_{0}\rangle\tau
+\lambda_{i}^{(m)}-\lambda_{j}^{(m)}\big|<\frac{(|i-j|+1)\gamma_{m}}{A_{k}}\right\}.
\end{equation}
\begin{description}
  \item[Case 1.] If $i=j,$ one has $ k\neq 0.$%%%%%%%%%%%%%%%%%%%%%%%%%%%%%%%%%%%%%%%%%%%%%%%%%%%%

At this time,
\begin{equation}\label{eq10.02}
Q_{kii}^{(m)}= \left\{\tau\in\Pi_{m}\bigg| \big|\langle k, \omega_{0}\rangle\tau
\big|<\frac{\gamma_{m}}{A_{k}}\right\}.
\end{equation}
It follows
$$ \big|\langle k, \omega_{0}\rangle
\big|<\frac{\gamma_{m}}{|k|^{n+3}\tau}<\frac{\gamma}{2^{m}|k|^{n+3}}.$$
Recall $|\langle k, \omega_{0}\rangle |>\frac{\gamma}{|k|^{n+1}}$.
Then
\begin{equation}\label{eq10.03}
mes Q_{kii}^{(m)}=0.
\end{equation}
  \item[Case 2.] $i\neq j.$ %%%%%%%%%%%%%%%%%%%%%%%%%%%%%%%%%%%%%%%%%%%%%%%%%%%%%%%%%%%%%

If $ Q_{kij}^{(m)}=\varnothing,$ then $ mes Q_{kij}^{(m)}=0$. So we assume $ Q_{kij}^{(m)}\neq\varnothing,$ in the following. Then $\exists \,\tau\in \Pi_{m}$ such that
\begin{equation}\label{eq10.04}
|-\langle k, \omega_{0}\rangle \tau+\lambda_{i}^{(m)}-\lambda_{j}^{(m)}|<\frac{|i-j|+1}{A_{k}}\gamma_{m}.
\end{equation}
It follows from \eqref{eq28} and \eqref{eq29} that
\begin{equation}\label{eq10.05}
\lambda_{i}^{(m)}-\lambda_{j}^{(m)}=i^{2}-j^{2}+O (\varepsilon_{0})\geq \frac{2}{3}|i^{2}-j^{2}|.
\end{equation}
When $|i|\geq C |k|\gg|\langle k, \omega\rangle|$ or $|j|\geq C |k|\gg|\langle k, \omega\rangle|$.%%%%%%%%%%%%%%
By \eqref{eq10.05}, one has $$\big|-\langle k, \omega\rangle
+\lambda_{i}^{(m)}-\lambda_{j}^{(m)}\big|\geq \frac{2}{3}|i+j||i-j|-|\langle k, \omega\rangle|\geq
\frac{1}{2}|i+j||i-j|,$$ which implies $ Q_{kij}^{(m)}=\varnothing,$ then
\begin{equation}\label{eq10.013}
mes Q_{kij}^{(m)}=0.
\end{equation}
Now assume $$|i|< C |k| \;\;\mbox{and} \;\;|j|< C |k|.$$%%%%%%%%%%%%%%%%%%%%%%%%%%%%%%%%%%%%%%%%%%%%%%%%%%%%%%%
Note that
$$-\langle k, \omega\rangle +i^{2}-j^{2}=-\langle k, \omega_{0}\rangle\tau +i^{2}-j^{2}=\tau(-\langle k, \omega_{0}\rangle+\frac{i^{2}-j^{2}}{\tau})$$
and
\begin{equation}\label{eq*}
\big|\frac{d}{d \tau}(-\langle k, \omega_{0}\rangle+\frac{i^{2}-j^{2}}{\tau})\big|=\frac{|i^{2}-j^{2}|}{\tau^{2}}\geq \frac{1}{4}|i^{2}-j^{2}|.
\end{equation}
It follows that
\begin{equation}\label{eq10.011}
mes Q_{kij}^{(m)}\leq \frac{8}{|i^{2}-j^{2}|}\left(\frac{|i^{2}-j^{2}|+1}{A_{k}}\gamma_{m}+C_{1}\varepsilon_{0}
\right).
\end{equation}
Then
\begin{eqnarray}\label{eq10.015}
&&mes \bigcup_{|k|\leq K_{m}}\bigcup_{\begin{array}{c}
                                i\leq C|k| \nonumber\\
                                j\leq C|k|
                              \end{array}}Q^{(m)}_{kij}\nonumber\\
&\leq & \sum_{|k|\leq K_{m}}\frac{C\gamma_{m}}{A_{k}}\sum_{i,j=1}^{C|k|}\frac{1}{i+j}
\leq  \sum_{|k|\leq K_{m}}\frac{C|k|^{2}\gamma_{m}}{A_{k}}
\leq  C\gamma_{m}.
\end{eqnarray}

Combining \eqref{eq10.03}, \eqref{eq10.013} and \eqref{eq10.015}, we have
\begin{eqnarray}\label{eq10.016}
mes \bigcup_{|k|\leq K_{m}}\bigcup_{i=1}^{\infty}\bigcup_{j=1}^{\infty}Q^{(m)}_{kij}
\leq C\gamma_{m}.
 \end{eqnarray}
 Let
 $$\Pi_{m+1}=\Pi_{m}\backslash
 \bigcup_{|k|\leq K_{m}}\bigcup_{i,j=1}^{\infty}Q^{(m)}_{kij}.$$
 Then we have proved the following Lemma \ref{lem10.1}.
\end{description}
\begin{lemma}\label{lem10.1}
$$mes\Pi_{m+1}\geq mes \Pi_{m}-C\gamma_{m}.$$
\end{lemma}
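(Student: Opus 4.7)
The plan is to bound the measure of $\bigcup_{|k|\leq K_m}\bigcup_{i,j\geq 1} Q^{(m)}_{kij}$ by $C\gamma_m$, from which the claim $\mathrm{mes}\,\Pi_{m+1}\geq \mathrm{mes}\,\Pi_m-C\gamma_m$ follows by the definition $\Pi_{m+1}=\Pi_m\setminus\bigcup Q^{(m)}_{kij}$. The natural split is on whether $i=j$ or $i\neq j$; in the first case I expect to rule out all the resonance sets outright using the Diophantine condition on $\omega_0$, while in the second case I will combine an eigenvalue-asymptotics argument (to confine the nontrivial indices to $|i|,|j|\lesssim |k|$) with a transversality estimate in $\tau$, followed by a geometric series in $k$.

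For $i=j$ the relevant inequality reads $|\langle k,\omega_0\rangle\tau|<\gamma_m/A_k$ with $k\neq 0$ and $\tau\in[1,2]$, so it implies $|\langle k,\omega_0\rangle|<\gamma/(2^m|k|^{n+3})$. Since the Diophantine hypothesis \eqref{eq3} forces $|\langle k,\omega_0\rangle|\geq \gamma/|k|^{n+1}$, this is impossible and $Q^{(m)}_{kii}=\varnothing$, hence of zero measure. For $i\neq j$, I use the inductive estimate on the eigenvalues: $\lambda^{(m)}_i-\lambda^{(m)}_j=i^2-j^2+O(\varepsilon_0)$ by $(A1)_m$, and $|\langle k,\omega\rangle|\leq C|k|$ since $\tau\in[1,2]$ and $\omega_0$ is fixed. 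When $\max(|i|,|j|)\geq C'|k|$ the quantity $|{-\langle k,\omega\rangle}+\lambda^{(m)}_i-\lambda^{(m)}_j|$ is therefore at least $\tfrac12|i+j||i-j|$, much larger than the right-hand side of the defining inequality, so again $Q^{(m)}_{kij}=\varnothing$.

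It remains to handle $|i|,|j|<C|k|$ with $i\neq j$. Writing $-\langle k,\omega\rangle+\lambda^{(m)}_i-\lambda^{(m)}_j=\tau\bigl(-\langle k,\omega_0\rangle+(\lambda^{(m)}_i-\lambda^{(m)}_j)/\tau\bigr)$ and differentiating in $\tau$, I obtain $\bigl|\tfrac{d}{d\tau}(\cdot)\bigr|\geq \tfrac14|i^2-j^2|$ using $\tau\in[1,2]$ and $|\partial_\tau\mu^{(i)}_j|\leq C(i)$ from \eqref{eq029}. By the standard one-dimensional sublevel-set lemma this gives
\[
\mathrm{mes}\,Q^{(m)}_{kij}\leq \frac{C\,(|i-j|+1)\gamma_m}{|i^2-j^2|\,A_k}=\frac{C\gamma_m}{(i+j)\,|k|^{n+3}}.
\]
Summing,
\[
\mathrm{mes}\bigcup_{|k|\leq K_m}\bigcup_{\substack{i,j\leq C|k|\\ i\neq j}}Q^{(m)}_{kij}\leq C\gamma_m\sum_{|k|\leq K_m}\frac{1}{|k|^{n+3}}\sum_{i,j\leq C|k|}\frac{1}{i+j}\leq C\gamma_m\sum_{k\in\mathbb Z^n}\frac{|k|^2\log|k|}{|k|^{n+3}},
\]
which converges and is bounded by $C\gamma_m$. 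Combining the three cases yields the total bound $C\gamma_m$ and finishes the proof.

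The main obstacle I anticipate is the book-keeping needed to justify the transversality bound in the perturbed setting: one must verify that the $\tau$-derivatives $\partial_\tau\lambda^{(m)}_i$ accumulated over $m$ KAM steps do not destroy the inequality $|\tfrac{d}{d\tau}(\cdot)|\gtrsim |i^2-j^2|$. This is ensured by the smallness $\sum_i\varepsilon_i C(i)\ll 1$ from \eqref{eq029}, but requires checking that the Lipschitz estimates $(A1)_m$ have been propagated correctly, which is the delicate point feeding into the measure estimate above.
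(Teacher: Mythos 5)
Your proposal is correct and follows essentially the same route as the paper: you split off the $i=j$ case (ruled out by the Diophantine condition on $\omega_0$), discard $i\neq j$ with $\max(|i|,|j|)\gtrsim |k|$ via the eigenvalue asymptotics $\lambda^{(m)}_i-\lambda^{(m)}_j=i^2-j^2+O(\varepsilon_0)$, and use the $\tau$-transversality $|\frac{d}{d\tau}(\cdot)|\gtrsim|i^2-j^2|$ together with the cutoff $|i|,|j|\lesssim|k|$ and $A_k=|k|^{n+3}$ to sum up to $C\gamma_m$. Your slightly larger bound $\sum_{i,j\leq C|k|}\frac{1}{i+j}\leq C|k|^2\log|k|$ is harmless (the sum is in fact $O(|k|)$, and the paper uses the equally wasteful bound $|k|^2$), and your remark about propagating the Lipschitz bounds \eqref{eq029} is exactly the point the paper handles, somewhat implicitly, through the extra $C_1\varepsilon_0$ term in its sublevel-set estimate.
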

%%%%%%%%%%%%%%%%%%%%%%%%%%%%%%%%%%%%%%%%%%%%%%%%%%%%%%%%%%%%%%%%%%%%%%%%%%%%%%%%%%%%%%%%%%%%%%%%%%%%%
\section{Proof of Theorems}
Let
$$\Pi_{\infty}=\bigcap_{m=1}^{\infty} \Pi_{m},$$
and $$\Psi_{\infty}=\lim_{m\rightarrow \infty}\Psi_{0}\circ\Psi_{1}\circ \cdots \circ \Psi_{m}.$$
By \eqref{eq33} and \eqref{eq33H}, one has
$$\Psi_{\infty}: \mathbb{T}^{n}\times \Pi_{\infty}\rightarrow \mathbb{T}^{n}\times \Pi_{\infty},$$
$$||\Psi_{\infty}-id||\leq \varepsilon_{0}^{1/2},$$
and, by \eqref{eq34},
$$H_{\infty}=H \circ \Psi_{\infty}=\sum_{j=1}^{\infty}\lambda_{j}^{\infty}Z_{j}\overline{Z}_{j},$$
where
$$\lambda_{j}^{\infty}=\lim_{m\rightarrow \infty}\lambda_{j}^{(m)}.$$
By \eqref{eq28} and \eqref{eq29}, the limit $\lambda_{j}^{\infty}$ does exists and
$$\lambda_{j}^{\infty}=j^{2}+M+O (\varepsilon_{0}).$$
This completes the proof of Theorem \ref{thm1.1}.
%%%%%%%%%%%%%%%%section  Acknowledgements%%%%%%%%%%%%%%%%%%%%%%%%%%%%%%%%%%%%%%%%%%%%%%%%%%%%%%%%%
\section*{Acknowledgements}
The work was supported in part by National
Nature Science Foundation of China (11601277).
%%%%%%%%%%%%%%%%%%%%%%%%%%%%%%%%%%%%%%%%%%%%%%%%%%%%%%%%%%
%%%%%%%%%%%%%%%%%%%%%%%%%%%%%%%%%%%%%%%%%%%%%%%%%%%%%%%%%%

\end{document}